\documentclass[12pt]{amsart}
\usepackage{latexsym,fancyhdr,amssymb,color,amsmath,amsthm,graphicx,listings,comment}
\usepackage[section]{placeins}
\newtheorem{thm}{Theorem} 
 
\newtheorem{lemma}{Lemma} 
\newtheorem{coro}{Corollary}
\newtheorem{conj}{Conjecture}
\let\paragraph\subsection

\title{On Natural Groups}
\author{Oliver Knill}
\date{9/7, 2026}
\address{Department of Mathematics \\ Harvard University \\ Cambridge, MA, 02138 }
\subjclass{}

\keywords{Natural groups, groups, metric spaces}

\begin{document}
\maketitle

\begin{abstract}
A group $(G,*)$ is natural \cite{GraphsGroupsGeometry} if its group operation is uniquely 
determined by a metric structure $(G,d)$ on $G$ in the following sense: every group
structure $(G,\circ)$ for which all right translations are isometries
of $(G,d)$ must be isomorphic to $(G,*)$. Examples included all connected Lie groups or
all groups generated by involutions \cite{Sutherland2026}. The orientation rigidity theorem of 
\cite{LeemannDeLaSalle2021} allows to upgrade the non-abelian structure theorem:
every non-abelian group of cardinality $|G| \leq c$ that is not generalized 
dicyclic is natural. A consequence is that all non-metabelian Lie groups, all simple
group of cardinality $|G| \leq c$, 
all homeomorphism-, diffeomorphism -or symplectomorphism groups of manifolds, or 
automorphism groups probability spaces or non-abelian crystallographic groups 
are natural. Also the abelian structure theorem is extended:
affine rigidity of Jarosz \cite{Jarosz1988}, together with Mazur-Ulam's theorem implies
that the additive group of every real or complex vector spaces is natural and that
all connected abelian Banach Lie groups are natural. A theorem of Babai \cite{Babai1978}
implies that every Boolean group $C_2^{(k)}$ is natural for every cardinal $k$.
Undecided is whether $C_p^{(k)}$ is natural for cardinals $k>c$ and odd prime $p$,
and whether the additive group of any field $F$ is natural, or whether the cardinality 
assumption in the non-abelian structure theorem is needed. A major question is whether
$G^2=G$ implies that $G$ is natural. This is already open for abelian groups:
does $2G=G$ imply that $G$ is natural?
\end{abstract}

\section{Introduction}

\paragraph{}
Within topological structures, {\bf metric spaces} are particularly important.
They balance generality, simplicity and intuition. Although topological spaces
are more general than metric spaces, many of the spaces that appear in geometry and 
analysis are metrizable. Moreover, isometries are considerably more rigid on 
metric spaces than homeomorphisms. The group of isometries of a group equipped 
with a metric has a chance to be of comparable size than $G$.

\paragraph{}
Among algebraic structures with a single operation, {\bf groups} play a
central role. While monoids, magmas, quasigroups, loops, groupoids or 
semigroups are more general, symmetries in applications are 
overwhelmingly modeled by groups. Klein's Erlangen program made
the relationship between groups and geometry more explicit.
For continuous symmetry groups, Noether's theorem links groups with 
conservation laws.

\paragraph{}
What happens if the same set $G$ carries both a group structure $(G,*)$ as
well as a metric structure $(G,d)$? 
We called a group {\bf natural} \cite{GraphsGroupsGeometry}, 
if there exists a metric $(G,d)$ such that all right 
translations are isometries and, up to group isomorphism, 
no other group structure on the same set has this property. 
For a natural group, its multiplication table is determined by geometry.

\paragraph{}
A group of finite prime order $p$ is natural because Lagrange's theorem implies
that there is only one group of order $p$.
The group $C_2^r=C_2 \times C_2 \times \cdots \times C_2$ is natural because it 
is a reflection group. Intuitively, it can be see to be natural because it is the
symmetry group of a discrete $r$-cube for which all sides have different lengths. 
Cyclic groups $C_{2k+1}$ are natural because we know the automorphism group 
of the cyclic graph. 

\paragraph{}
A few less obvious mechanisms allow to prove that group is natural. $(\mathbb{R}^n,+)$  for example is 
natural because it is a connected Lie group. It can also be seen to be natural because
it is the additive group of a vector space. The Euclidean group $E(n)$ in $n>0$-dimensions
is natural because it is generated by involutions. The diffeomorphism group ${\rm Diff}(\mathbb{T}^1)$ 
of the circle is natural because its commutator subgroup is simple. 
This prevents it to be generalized dicyclic and forces it to be natural by a structure theorem. 

\paragraph{}
Similarly, there are a few mechanisms known for a group to be non-natural.
The integer group $(\mathbb{Z},+)$ is not natural: let $d$ be any translation 
invariant metric. Then $T(x)=x+1$ is an isometry. It follows that $S(x)=-x$ is also
an isometry because $d(-x,-y)=d(0,x-y)=d(y,x)=d(x,y)$ by the 
symmetry axiom $d(x,y)=d(y,x)$ of $d$. The two maps $a(x)=x+2$ and $b(x)=1-x$ are
both isometries and $b^2=1, b a b=a^{-1}$, so that the action of 
$\langle a,b \rangle \cong D_{\infty}$ is {\bf regular} (transitive and free):
{\bf transitivity} holds as $a$ preserves parity classes while $b$ interchanges it.
It is {\bf free} because $a^k(x)=x$ implies $k=0$ while
$a^k b(x)=x$ would give $2x=2k+1$ which is impossible in $\mathbb{Z}$.
Every translation-invariant metric on $\mathbb{Z}$ therefore also admits the 
non-abelian, and so non-isomorphic, group $D_{\infty}$ as a regular 
group of isometries. 

\paragraph{}
The various paths to naturality make it a rich topic for learning about group theory. We hope this note
illustrates it. There are cute algebraic reasons for naturality like that a "simple group of cardinality not exceeding
the continuum is natural" for example. Cardinality at the moment still matters for some conditions, but it
might just be an artificial barrier. The removal of the cardinality constraint in the reflection group case shows this. 
We summarize some of the remaining obstacles in the last section. In the non-abelian case, a concrete example with unknown 
"naturality status" is the {\bf Heisenberg group} over a field $F$ of characteristic $3$, with 
$|F|>c$, where $c=2^{\aleph_0}$ is the {\bf cardinality of the continuum}. 
An other example is $L^{\infty}(X,SL(2,\mathbb{R}))$, if the probability space $X$ is non-standard. 
\footnote{In the case $L^{\infty}(X,{\rm PSL}(2,\mathbb{R}))$, the group is a reflection group which is
covered by \cite{Sutherland2026}. For standard probability spaces the structure theorem kicks in}.
In the abelian case, the group $C_3^{(k)}$ for cardinalities $k>c$ is not yet known to be natural.
We expect these cardinality restrictions to tumble, as encoding methods like in 
\cite{Babai1978} or transfinite induction as in \cite{Sutherland2026} should work.

\section{Groups generated by involutions}

\paragraph{}
An {\bf involution} or {\bf reflection} in a group $G$ is an element $g \neq 1$ 
such that $g^2=1$. We use the words involution and reflection as synonyms.
We call a group $(G,*)$ a {\bf reflection group} or {\bf involution group},
if it is a group with a subset $S$ of involutions 
such that every element $g \in G$ is a finite product of elements in $S$. 
{\bf Reflection group} is understood purely algebraically. No geometric realization like hyperplane 
reflections are assumed. The group $SO(3)$ for example is a reflection group, even so
the reflections that generate the group are line reflections. 

\begin{thm}[Reflection groups \cite{Sutherland2026} ]
Every reflection group is natural.
\end{thm}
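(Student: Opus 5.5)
The plan is to build, for a reflection group $(G,*)$ generated by a set $S$ of involutions, a metric $d$ whose full isometry group is exactly the right regular representation $R(G)=\{x\mapsto x*g\}$. Once that is done, naturality is immediate. Any group structure $(G,\circ)$ whose right translations are isometries of $(G,d)$ gives a group $\{x\mapsto x\circ g\}$ acting regularly on $G$ inside ${\rm Isom}(G,d)=R(G)$. A regular subgroup of a regular permutation group must be the whole group, and the map $g\mapsto (x\mapsto x\circ g)$ is an (anti-)isomorphism onto $R(G)\cong (G,*)$. So $(G,\circ)\cong(G,*)$. The whole problem is therefore a metric analogue of a graphical regular representation.

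First I would reduce to a word-metric-like construction. Enlarge $S$ if needed so that it is still a set of involutions generating $G$. Choose a weight function $w:S\to(1,2)$ that is injective on $S$, and let $d(x,y)=\inf\{\sum_i w(s_i): x^{-1}y=s_1\cdots s_n\}$. Since every $s\in S$ is its own inverse, $d$ is symmetric. It is a metric because all weights are at least $1$, and it is left-invariant. To get right translations as isometries, use the convention $d(x,y)$ depending on $yx^{-1}$ instead. Two points are at distance exactly $w(s)$ with $w(s)<2$ only if they differ by the single generator $s$, because any word of length at least $2$ has weight greater than $2$. Hence an isometry $\phi$ preserves the coloured Cayley graph: $\phi(x)$ and $\phi(xs)$ are again joined by an $s$-edge. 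The fact that generators are involutions is what makes edge colours well defined without orientation, and this is exactly where the hypothesis enters. For $\mathbb{Z}$ the same construction fails because the generator $1$ and its inverse $-1$ carry the same weight, which produces the reflection $x\mapsto -x$.

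Next I would show that a colour-preserving automorphism of the Cayley graph is a translation. Replace $\phi$ by $\phi$ composed with a translation so that $\phi(1)=1$. Then $\phi(s)=s$ for each $s\in S$, since $s$ is the unique $s$-neighbour of $1$. Inductively, $\phi(s_1\cdots s_n)=s_1\cdots s_n$, because following a coloured path from a fixed point is deterministic. Since $S$ generates $G$, $\phi$ is the identity.

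The main obstacle is the injective choice of weights $w:S\to(1,2)$. This requires $|S|\le c$, so for large groups the argument breaks. To remove the cardinality restriction I would work by transfinite induction, following the idea credited to \cite{Sutherland2026}. Well-order $S$ and build the metric in stages, so that colours are distinguished by finite configurations rather than by real numbers. For example, one can use distances taking values in a bounded set but attach to each generator $s$ a rigid "gadget": a pattern of distances among short words involving $s$ and earlier generators that no other generator can reproduce. I expect verifying that these gadgets survive isometries, without introducing new short coincidences, to be the delicate step. If that fails, I would at least record the proof for $|G|\le c$ and isolate the transfinite step as the remaining issue.
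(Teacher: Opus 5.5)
Your argument is correct for $|G|\le c$, and it is the same as the sketch the paper gives. You use distinct weights in $(1,2)$ on the generators and the right-invariant word metric, and note that $d(x,y)<2$ forces $yx^{-1}\in S$ and then identifies it. Induction on word length shows an isometry fixing $1$ is the identity, and the regular-subgroup argument finishes. (With $d$ depending on $yx^{-1}$, the $s$-edges are $\{x,sx\}$ rather than $\{x,xs\}$; this changes nothing.)

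The statement, however, has no cardinality hypothesis, and for $|S|>c$ your proposal has a genuine gap, which you flag yourself. The obstruction is concrete:
\begin{itemize}
\item A right-invariant metric has the form $d(x,y)=f(yx^{-1})$ with $f$ real-valued and $f(g)=f(g^{-1})$, so it takes at most $c$ values. Once $|S|>c$, many generators share the value $f(s)$, and $d(1,s)$ can no longer identify $s$. Your recognizability step therefore fails.
\item Conjugating an isometry by right translations, one reduces to showing that every isometry fixing $1$ fixes each $s\in S$. For your strategy this requires a partition of $G\setminus\{1\}$ into at most $c$ inverse-closed classes such that the coloured complete Cayley graph has automorphism group exactly $R(G)$. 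A graphical regular representation would already do, via distances $0,1,2$.
\item Babai's theorem provides such an object only for directed Cayley graphs. That suffices for Boolean groups, where every set is inverse-closed, but not for general reflection groups.
\item Your transfinite ``gadget'' scheme must handle several things it does not address. It must respect $f(g)=f(g^{-1})$ and cope with coincidences forced by the relations of $G$; for example, a product $s_\beta s_\alpha$ of commuting generators is itself an involution and may lie in $S$. It must also exclude every element $g$, not just every other generator, from reproducing the anchored distance profile of $s_\alpha$.
\end{itemize}
As written, this step is a plan rather than a proof.

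The paper does not close this gap either. Its proof only sketches the $|G|\le c$ case and defers the general case entirely to \cite{Sutherland2026}. Your proposal therefore proves exactly what the paper itself proves. For the full statement you must either invoke that result or actually construct and verify a rigid inverse-closed colouring valid in every cardinality.
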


\begin{proof}
The argument for $|G| \leq c$ appeared already in \cite{GraphsGroupsGeometry}. 
We do not go into the proof of \cite{Sutherland2026}. But it can be illustrative to sketch
the simpler case $|G| \leq c$ as it illustrates a central idea. 
Define the {\bf weighted Cayley graph} $\Gamma(G,S)$ equipped with 
geodesic metric generated by putting distinct weights $\lambda_s \in (1,2)$ on the edges
$(g,sg)$ (this uses the cardinality) and defining $d(g,sg)=\lambda_s$.
Right translations are now graph automorphisms as $d(ga,ha)=d(g,h)$ and
every $s \in S$ is metrically recognizable. 
If an isometry $T$ of $(\Gamma,d)$ fixes a vertex $g$, it must be the identity: 
the reason is that if $Tg=g$ then $T(s g)=s g$ and by induction on word length $T(g)=g$ 
for all $g \in G$. As right translation are always transitive, the action
is regular and ${\rm Iso}(G,d)=R(G)$. So much about the proof in the $|G| \leq c$ case. \\
A natural conjecture was then, and the naturality of $C_2^k$ for any cardinality supported 
that "every group $G$ generated by involutions is natural".
A few days after asking this on youtube and mentioning on our blog, the paper 
\cite{Sutherland2026} proved it with a quite astounding proof. 
\end{proof} 

\paragraph{}
An example of a finite reflection group is the symmetric group $S_n$. It is either generated by transpositions
or by its $(n-1)$ {\bf pancake flips}. Groups appearing as transposition games or
the square Rubik group. {\bf Weyl groups} are generated by finitely many reflections.
The statement $\langle S \rangle = G$ already means that
$G=\bigcup_n S^n$, meaning that every $g \in G$ is a product of 
finitely many $s_i \in S$. The set $|S|$ does not have to be finite. 
The case $|S|<\infty$ would mean {\bf finitely generated}. 

\paragraph{}
All real linear Lie groups generated by {\bf hyperplane reflections} 
are disconnected because the determinant of a hyperplane reflection is $-1$. 
This is not true if hyperplane reflections is replaced 
by reflections. An example is $SO(3)$ which is a connected reflection group
but where the generating set are line reflections.
The group $\mathbb{T}^1 \cong SO(2) \cong S^1$ is the simplest positive dimensional Lie group
that contains only one involution and so is not a reflection group. Also the abelian 
Lie group $\mathbb{T}^n$ is not a reflection group. 
The torus group is natural because connected Lie groups are natural in general.
Reflection groups can be connected in infinite dimensions. The unitary group 
$U(H)$ of an infinite dimensional Hilbert space illustrates this (the group is contractible). 
But $O(2) \times U(H)$ is an example of a reflection group in infinite dimensions that
is disconnected.

\paragraph{}
All orthogonal groups $O(n)$ and the pseudo-orthogonal groups $O(p,q)$ 
or {\bf complex orthogonal group} $O(n,\mathbb{C}) = \{ A \in M_n(\mathbb{C}),  A^T A=1 \}$
are reflection groups by the {\bf Cartan-Dieudonn\'e} theorem.
For the unitary group $U(n)$, the {\bf Cauchy-Binet determinant product formula} prevents the group 
to be a reflection group: involutions have determinant $1$ or $-1$ so that a unitary matrix 
with non-real determinant can not be the product of reflections. Cauchy-Binet implies that 
groups generated by reflections must have determinants $1$ or $-1$. But $U(n)$ is still
natural as it is a connected Lie group.

\paragraph{}
If $A$ is an arbitrary abelian group, then the {\bf generalized dihedral group} ${\rm Dih}(A)$ is always
a reflection group. Examples are $D_n={\rm Dih}(C_n)$ or $D_{\infty}={\rm Dih}(\mathbb{Z})$. 
But \cite{Sutherland2026} now makes sure that for any abelian group,
the generalized dihedral group $G={\rm Dih}(A) = A \rtimes_{-1} C_2$ is always natural.

\paragraph{}
For many manifolds $M$ like $S^n$ the groups ${\rm Diff}_0(S^n)$ or ${\rm Homeo}_0(S^n)$ 
are reflection groups. But the reflection group approach does not cover everything: 
${\rm Diff}_0(M)$ surfaces $M$ of genus larger than $2$ for example are not generated by reflections.
{\bf Mapping class group} ${\rm Diff}(M)/{\rm Diff}_0(M)$ issues already for $M=S^4$.
We will establish naturality of ${\rm Diff}(M),{\rm Homeo}(M)$ or ${\rm Sympl}( (M,\omega))$ 
by checking that it is not metabelian and that the cardinality bound works. The later
only requires the usual assumption, that the manifold $M$ is second countable.

\paragraph{}
The group $U(H)$ of unitary operators on an infinite dimensional Hilbert space 
$H$ is an infinite-dimensional {\bf Banach Lie group}.
By the theorem of Halmos-Kakutani \cite{HalmosKakutani}, every element in the unitary group 
$U(H)$ is a product of at most $4$ involutions, so that it is a reflection group. Interestingly,
their proof writes $U$ as a product of two shifts, then dihedrates each of the $\mathbb{Z}$ actions
to a $\mathbb{D}_{\infty}$ action. Also the group ${\rm GL}(H)$ of bounded invertible operators 
is a reflection group.

\paragraph{}
Let $(\Omega,\mathcal{A},P)$ be a standard probability space. 
The group of measure-preserving transformations ${\rm Aut}(\Omega,\mathcal{A},P)$ 
is a reflection group of {\bf reflections width} $\leq 3$ \cite{Fathi1978}. 
In the non-atomic case, $G$ is a non-abelian simple group as $G'=[G,G]$ is normal leads to $G'=G$
so that $G''=G' \neq 1$ and the group is perfect so that only one single involution needs 
to be present. For the automorphism group of a localizable measure algebra we still have
that $G=S \cup S^2 \cup S^3$ for some set of involutions but $|S|>c$ in general.

\paragraph{}
If $G,H$ are reflection groups then $G \times H$ is. More generally, for an
infinite family of groups we have $H=\oplus_{i \in I} C_2 \subset G=\prod_{i \in I} C_2$, 
with strict inclusion. Both are reflection groups. 
$H$ is generated by coordinate involutions, while $G$ is not necessarily generated by 
coordinate involutions. It is still generated by involutions like
$S=G \setminus \{1\}$ for example. Every element $g \in S$ is an involution. 
If there is a uniform bound on the {\bf reflection width} of reflection groups $G_i$ 
then $G=\prod_{i \in I} G_i$ is a reflection group too. 

\paragraph{}
Generalized dicyclic groups $G={\rm Dic}(A,y)$ that appear in the next section are never
reflection groups. The reason is that if $y=x^2$ and $x a x^{-1}=a^{-1}$ for all $a \in A$
prevents every element in $xA$ to be an involution so that involutions can only be present
in $A$ so that involutions only generate $A$. Dicyclic groups are the major enemy for
naturality. 

\paragraph{}
Assume $A$ is an abelian group. Define $A \to {\rm Dih}(A) = A \rtimes_{-1} C_2$ for
the {\bf generalized dihedral group}. This definition can not be extended in general to non-abelian groups
as $g \to g^{-1}$ is only an anti-automorphism then. For a non-abelian group $G$, the
semi-direct product $G \rtimes_{-1} C_2$ is not defined in general. Dihedration is also some
sort of {\bf Coxeterization} or ``make it Coxeter" construction. Rather than looking at the 
translation group $\mathbb{R}^n$ for example, take the group generated by point reflections.
Points are enhanced to become reflection operations. This is a notion which extends to 
{\bf Hadamard manifolds}, Riemannian manifolds where all sectional curvatures are non-negative.
In that case there is a nice reflection subgroup of ${\rm Diff}(M)$ generated by point reflections
$\{ s_p, p \in M\}$ enabled by the fact that between any two points, there is a unique geodesic. 

\begin{thm}
Every generalized dihedral group ${\rm Dih}(A)$ is natural.
\end{thm}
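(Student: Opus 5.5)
The plan is to reduce the statement to the Reflection group theorem: I will show that ${\rm Dih}(A)$ is a reflection group in the algebraic sense of this section, and then that theorem gives naturality directly.

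Write $G={\rm Dih}(A)=A \rtimes_{-1} C_2$. Its elements are pairs $(a,\epsilon)$ with $a\in A$ and $\epsilon \in C_2=\{1,\tau\}$. The product is $(a,\epsilon)(b,\delta)=(a\,\epsilon(b),\epsilon\delta)$, where $\tau$ acts on $A$ by $b\mapsto b^{-1}$. Set $S=\{(a,\tau) : a\in A\}$, the nontrivial coset $A\tau$.

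The first step is to check that every element of $S$ is an involution. A direct computation gives
\[
(a,\tau)(a,\tau)=(a\,a^{-1},\tau^2)=(1,1),
\]
and $(a,\tau)\neq 1$ because its second coordinate is $\tau$. This step needs only that inversion is an automorphism of $A$, which holds because $A$ is abelian.

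The second step is to check that $S$ generates $G$. Elements of the coset $A\tau$ already lie in $S$. An element $(a,1)$ of $A$ is the product of two reflections:
\[
(a,\tau)(1,\tau)=(a\cdot 1^{-1},\tau^2)=(a,1).
\]
Hence $G=S\cup S^2$, so $G$ is a reflection group of reflection width at most $2$.

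The third step is to apply the Reflection group theorem \cite{Sutherland2026}, which concludes that $G$ is natural. No cardinality hypothesis on $A$ is needed, because the theorem holds for all reflection groups. This covers the examples $D_n$, $D_\infty={\rm Dih}(\mathbb{Z})$, ${\rm Dih}(\mathbb{R}^n)$, and also degenerate cases. If $A$ has exponent $2$, then ${\rm Dih}(A)=A\times C_2$ is a Boolean group; it is still generated by involutions, so the argument goes through unchanged.

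I do not expect a real obstacle, since all the depth sits in the cited Reflection group theorem. The only care needed is in the definition: the statement should be read with the generalized dihedral group meaning the semidirect product with the inversion action. If one instead wanted a self-contained proof for $|G|\le c$, I would follow the weighted Cayley graph sketch given after that theorem with $S=A\tau$. One chooses distinct weights $\lambda_s\in(1,2)$, and uses injectivity of $s\mapsto\lambda_s$ to show that an isometry fixing a vertex fixes every vertex, by induction on word length in $S$. That route again goes through only up to the continuum, which is why I would rely on \cite{Sutherland2026} instead.
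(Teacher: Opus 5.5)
Your proposal is correct and takes the same route as the paper. The paper's proof just notes that ${\rm Dih}(A)$ is generated by involutions and then invokes the Reflection group theorem; your check that the coset $A\tau$ consists of involutions and that $G=S\cup S^2$ supplies the detail the paper leaves implicit.
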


\begin{proof}
Any ${\rm Dih}(A)$ is generated by involutions and so a reflection group.
Every reflection group is natural. 
\end{proof}

\paragraph{}
We know already that the $p$-adic group of integers $\mathbb{Z}_p$ is natural if and only the prime $p$ is odd.
We can ``dihedrate'" it however: ${\rm Dih}(\mathbb{Z}_2)$ is natural.
Unlike $(\mathbb{Z}_2,+)$ that was not natural, the group $\mathbb{Z} \rtimes_{-1} C_2$ is natural.
The dihedration process works in general only for abelian groups $A$.
It is analog to the generalized dicyclic construction which also emerges
from an abelian group. Unlike for dihedration, the dicyclic enhancement process
depends on the choice of involution $y$.

\begin{center}
\begin{tabular}{|ccc|} \hline
Abelian  $A$        & Dihedrated natural group    &   Dicyclic, non-natural group  \\ \hline
$C_{2m}, m \geq 2$  & $D_{2m}$                  &   ${\rm Dic}_m$               \\
$C_4$               & $D_4$                     &   $Q_8={\rm Dic}_2$            \\
$C_4 \times C_2$    & $D_4 \times C_2$          &   $Q_8 \times C_2$ or $C_4 \rtimes_{-1} C_4$          \\
$R \times C_2$      & ${\rm Dih}(R\times C_2)$        &   ${\rm Dic}(\mathbb{R} \times C_2,(0,1) \cong \mathbb{R} \rtimes_{-1} C_4$. \\
$S^1$               & ${\rm Dih}(S^1,-1)={\rm Pin}^+(2)=O(2)$ & ${\rm Dic}(S^1,-1) \cong {\rm Pin}^-(2)$    \\
$C_{2^\infty}$      & ${\rm Dih}(C_{2^\infty})$       &   ${\rm Dic}(C_{2^\infty},y)$         \\ 
$\mathbb{Z}$        & $D_{\infty}$              &             - (not existent)                 \\
$\mathbb{Z}_2$      & $\mathbb{Z}_2 \rtimes_{-1} C_2$    &    - (not existent)                 \\ \hline
\end{tabular}
\end{center}
o
\paragraph{}
The last two examples in that table contain torsion free $A$, so that there is
no nonzero element with $2y=0$. The groups $Q_8 \times C_2$ and $C_4 \rtimes_{-1} C_4$ show
that unlike the dihedral construction which is canonical for $A$, 
the dicyclic construction can depend on the choice of involution $y$.
Any dicyclic construction can transition to a dihedral construction:
if $G={\rm Dic}(A,y)$ is non-natural, just replace the order-four element $x$ satisfying $x^2=y$
by an actual involution $z$. We get then ${\rm Dih}(A)=A \rtimes_{-1} C_2$ which is natural. 
Iconic examples in the table are ${\rm Pin}^-(2) = {\rm Dic}(S^1,-1)$ is non-natural, whereas
${\rm Pin}^+(2) = {\rm Dih}(S^1,-1) \cong O(2)$ is natural.
It is interesting that even so we currently do not know whether the
3-group $A=C_3^{k}$ with $k>c$ is natural or not. Still, we can say that
${\rm Dih}(A)$ is natural simply because it is a reflection group.

\section{Structure theorem for non-abelian groups}

\paragraph{}
Lets start with the definition of {\bf generalized dicyclic groups}.
Let $A$ be an arbitrary abelian group and let $y \neq 1$ be an involution in $A$.
Adjoin an element $x$ as a square root of $y$ so that $x^2=y, x a x^{-1} =a^{-1}$ for all $a \in A$. 
The group ${\rm Dic}(A,y) = \langle A,x, x^2=y, x a x^{-1} = a^{-1}, \forall a \in A \rangle$
is called a {\bf generalized dicyclic group}, if it is non-abelian. 
(The assumption of being non-abelian is standard but excludes $C_4$.)
Since $y \neq 1$ and $y^2=1$, the element $x$ has order $4$. 
The quaternion group ${\rm Dic}(C_4,-1)=Q_8$ is the smallest dicyclic group.
It uses $A=C_4 = \langle i \rangle = \{1,i,-1,-i\}$ and  $x=j$, a second quaternion
generator. Now $x^2=-1 \in A$ and $x^4=1$. 

\paragraph{}
Upgrading a theorem from \cite{GraphsGroupsGeometry}, we remove here the assumption that
the group $G$ is {\bf finitely generated}. We still need $|G| \leq c$, where $c=2^{\alpha_0}$ 
is the {\bf cardinality of the continuum}. We repeat the proof, as we see it as a
not so obvious improvement. It especially required to look at the literature more carefully.
The theorem is directly on the work of Leemann and de la Salle.
Their {\bf orientation rigidity theorem} (seen in the Appendix) is 
extracted from parts leading to Theorem~7 in \cite{LeemannDeLaSalle2021}. This theorem
did not need any cardinality assumptions (as they state explicitly). 
The cardinality bound $|G| \leq c$ is still 
needed still for the Cayley graph coloring and so for the construction of the
metric. But \cite{Sutherland2026} and earlier breaches of cardinality independent work
using clever encoding \cite{Babai1978}, suggest that the cardinality bound is 
not needed. This is open at the moment. We post this note now as the mathematical world 
looks already different next week. 

\begin{thm}[Structure theorem]
Let $G$ be a non-abelian group of cardinality $|G|\leq c$. \\
Then, $G$ is natural $\Leftrightarrow$ $G$ is not generalized dicyclic.
\end{thm}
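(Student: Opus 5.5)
We prove the two directions separately. The direction ``natural $\Rightarrow$ not generalized dicyclic'' needs no cardinality assumption. Let $G={\rm Dic}(A,y)$ and let $d$ be any metric for which all right translations are isometries. We imitate the $\mathbb{Z}$ versus $D_\infty$ argument from the introduction and build a second regular group of isometries isomorphic to ${\rm Dih}(A)$. Define the bijection $\phi$ by $\phi(a)=a$ and $\phi(xa)=xa^{-1}$ (or a similar twist on the coset $xA$). One checks, using $d(g,h)=d(gh^{-1},1)$, that $\phi$ is an isometry. The check uses two facts: inversion $g\mapsto g^{-1}$ restricted to suitable pieces is compatible with $d$ by the symmetry axiom, and $x a x^{-1}=a^{-1}$ together with $(xa)^{-1}=x a y$ lets every element of $xA$ be rewritten. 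Conjugating the right regular action by $\phi$, or composing right translations by elements of $A$ with $\phi$, gives a group of isometries acting regularly. This group contains the translations by $A$ and an involution inverting $A$, so it is isomorphic to ${\rm Dih}(A)$. Since ${\rm Dih}(A)$ has involutions outside $A$ while every element of $xA\subset{\rm Dic}(A,y)$ has order $4$, the two groups are not isomorphic. Hence $G$ is not natural. This is the classical observation that the full isometry group contains a ``dihedral twin''; the computation is routine once $\phi$ is fixed.

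For the converse, assume $G$ is non-abelian, not generalized dicyclic, and $|G|\le c$. Take $S=G\setminus\{1\}$ and build the weighted Cayley graph as in the proof of the reflection group theorem. We choose the weights $\lambda_s\in(1,2)$ with $\lambda_s=\lambda_{s^{-1}}$, all distinct otherwise; this is possible because $|S|\le c$. Put edges $(g,sg)$ and let $d$ be the geodesic metric. Because all weights lie in $(1,2)$, an edge of weight $\lambda$ is the unique shortest path between its endpoints, so $d(g,h)=\lambda_{hg^{-1}}$ for $g\neq h$. Consequently any isometry $T$ preserves the edge labelling up to the identification $s\sim s^{-1}$. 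Right translations are isometries. Now let $(G,\circ)$ be another group structure whose right translations are isometries. Then ${\rm Iso}(G,d)$ contains a regular subgroup $R'$. We must show that $R'$ is isomorphic to $G$, and it suffices to show ${\rm Iso}(G,d)=R(G)$, or at least that every regular subgroup of it is $R(G)$.

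An isometry fixing $1$ is a permutation $T$ of $G$ with $T(sg)\in\{s,s^{-1}\}\,T(g)$ for all $s,g$. This is exactly the setting of the orientation rigidity theorem of Leemann and de la Salle (Appendix): for a colored Cayley graph in which only the orientation of each color is forgotten, such label-preserving-up-to-orientation automorphisms are either left multiplications composed with a group automorphism of a very restricted form, or $G$ is abelian or generalized dicyclic. Under our hypotheses we conclude that the stabilizer of $1$ is trivial. Then ${\rm Iso}(G,d)=R(G)$ acts regularly, so $R'=R(G)$ and $(G,\circ)\cong(G,*)$.

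The main obstacle is this last step: matching the exact hypotheses and conclusion of the Leemann--de la Salle rigidity statement to our metric setting. Their theorem is phrased for graphical regular representations and oriented colorings, not for metrics. The points to verify are that our weights encode precisely the unoriented color $\{s,s^{-1}\}$, and that their exceptional cases (abelian, and generalized dicyclic of the specific form) are exactly the excluded ones. In particular, for non-finitely generated $G$ we use that their argument makes no finiteness or cardinality assumption. The only place $|G|\le c$ enters is the injective choice of weights in $(1,2)$.
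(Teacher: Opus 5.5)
Your direction ``not generalized dicyclic $\Rightarrow$ natural'' is the paper's proof. You use one weight in $(1,2)$ per inverse pair, and your geodesic metric is exactly $d(g,h)=\lambda_{[hg^{-1}]}$. An isometry fixing $1$ then satisfies $T(sg)\in\{s,s^{-1}\}T(g)$, orientation rigidity makes the stabilizer trivial, and a transitive subgroup of the regular group $R(G)$ is all of $R(G)$. The appendix uses the convention $\phi(gs)\in\phi(g)\{s,s^{-1}\}$; you pass to it by replacing $T$ with $g\mapsto T(g^{-1})^{-1}$.

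The gap is in the direction ``generalized dicyclic $\Rightarrow$ not natural'', where your concrete construction fails at both steps.

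\emph{The map is not an isometry.} Take $G={\rm Dic}(C_8,r^4)$ with $C_8=\langle r\rangle$. There $(xr^k)^{-1}=xr^{k+4}$, so for the metric $d(g,h)=\lambda_{[hg^{-1}]}$ you get $d(1,xr)=\lambda_{\{xr,xr^5\}}$ but $d(\phi(1),\phi(xr))=d(1,xr^{7})=\lambda_{\{xr^3,xr^7\}}$. Your map only works for $Q_8$, because there $a^{-1}\in\{a,ay\}$ for all $a\in A$. The twist that works is inversion on the coset: $\psi|_A={\rm id}$ and $\psi(xa)=(xa)^{-1}=xay$. This $\psi$ is a group automorphism with $\psi(u)\in\{u,u^{-1}\}$. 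Every right-invariant metric has the form $d(g,h)=f(gh^{-1})$ with $f(u)=f(u^{-1})$, so $\psi$ is an isometry for every such $d$.

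\emph{Neither recipe gives a competing regular group.} Conjugating $R(G)$ by any permutation yields a group isomorphic to $G$. The set $R(A)\cup R(A)\phi$ is not transitive: $\phi$ fixes $A$ pointwise, so the orbit of $1$ is only $A$. Moreover, $\psi$ commutes with every $R_a$ rather than inverting it.

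As in the $\mathbb{Z}$ versus $D_\infty$ example, where $b(x)=1-x$ includes the shift by the odd element $1$, you must combine the twist with a translation into the other coset. Set $\sigma=R_x\circ\psi$, that is, $\sigma(g)=\psi(g)x$. Then:
\begin{itemize}
\item $\sigma^2={\rm id}$, since $\psi(x)x=xyx=1$;
\item $\sigma R_a\sigma=R_{a^{-1}}$, since $xax=a^{-1}y$ with $y$ central;
\item no map $g\mapsto\psi(g)xa$ has a fixed point, since that would require $xa=1$ or $yxa=1$;
\item the orbit of $1$ under $H=R(A)\cup R(A)\sigma$ is $A\cup xA=G$.
\end{itemize}
So $H\cong{\rm Dih}(A)$ acts regularly by isometries; this is the paper's $H=\{R_x\circ i,R_a\}$.

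Finally, to conclude $H\not\cong G$, use an isomorphism invariant. ${\rm Dih}(A)$ is generated by its involutions, whereas all involutions of ${\rm Dic}(A,y)$ lie in the proper subgroup $A$. Saying ``involutions outside $A$'' refers to one particular subgroup, and an abstract isomorphism need not preserve it.
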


\begin{proof}
$\Rightarrow$. We use contraposition and verify that if
$G$ is generalized dicyclic implies that $G$ is not natural. Indeed,
the group $G={\rm Dic}(A,y)$ admits an involution that fixes $A$ and flips 
$i(g)=g^{-1}$ for $g=xa \notin A$. This involution is an isometry.
The group $H = \{ R_x \circ i, R_a, a \in A \}$ produces a 
{\bf dihedral competitor} ${\rm Dih}(A)=A \rtimes_{-1} C_2$ 
not isomorphic to $G$. Since $|H|=2|A|=|G|$, it acts regularly on $G$. 
$H$ can not be isomorphic to $G={\rm Dic}(A,y)$ as $H={\rm Dih}(A)$ is 
a reflection group, while ${\rm Dic}(A)$ is not. Indeed, 
$I(H) = \{ g \in H, g^2=1\} = H$ but $I(G) = \{ g \in G, g^2=1 \} \neq G$.
The property $I(G)=G$ is a property for groups that survives isomorphisms.  \\

$\Leftarrow$ This part is harder. We show that if $G$ is not generalized dicyclic
then $G$ is natural. This requires us to construct a metric space which forces
the group structure. \\

(i) {\bf Definition of the metric space $(G,d)$.} This is where cardinality is
important still. The cardinality of all {\bf colors} $[g]=\{g,g^{-1}\}$
with $g \neq 1$ $\leq |G| \leq c$. It is possible therefore to
label each $\lambda_{[g]}$ with a distinct element in the open interval 
$(1,2) \subset \mathbb{R}$. Define the metric $d(x,x)=0$ and 
$d(x,y)=\lambda_{[y x^{-1}]} \in (1,2)$ for $x \neq y$. It is symmetric, 
because $[xy^{-1}]=[yx^{-1}]$. Triangle inequality follows because
for three distinct points $x,y,z$ we have $1 < d(x,z) < 2 < d(x,y)+d(y,z)<4$. 

(ii) {\bf Right translations are isometries.}
For $a \in G$, $(ya)(xa)^{-1}=yaa^{-1}x^{-1}=yx^{-1}$. This implies
that $d(xa,ya)=d(x,y)$. Hence, every right translation
$R_a:x \mapsto xa$ is an isometry and so belongs to ${\rm Iso}(G,d)$.

(iii) {\bf An isometry fixing $1$ preserves the colors $[g]$. } 
Consider the {\bf complete Cayley graph} on G, whose edge $\{x,y\}$
is given the color $[yx^{-1}]=\{yx^{-1},xy^{-1}\}$.
The metric d assigns to every such color its 
own distinct real number $\lambda([yx^{-1}])$. 
(Here is the bottle neck for the cardinality so far.) 
Consequently, every isometry of $(G,d)$ preserves each 
individual color. In particular, an isometry fixing the neutral 
element $1$ is a color-preserving automorphism of the complete 
Cayley graph fixing $1$.

(iv) {\bf Applying the Leemann–de la Salle's orientation-rigidity theorem.}
It states that a group is orientation-rigid 
precisely if it is neither generalized dicyclic 
nor abelian with an element of order greater than $2$.
As $G$ is non-abelian and not generalized dicyclic,
rigidity follows: the stabilizer of $1$ in the color-preserving automorphism 
group of the complete Cayley graph is trivial and ${\rm Iso}(G,d)_1=\{1\}$
follows. 

(v) {\bf The full isometry group agrees with right translations.}
Let $T \in {\rm Iso}(G,d)$, put $a=T(1)$.
Since $R_{a^{-1}}$ is an isometry, $R_{a^{-1}}\circ T$
fixes $1$. By (iv), it must be the identity and $T=R_a$. 
Therefore ${\rm Iso}(G,d)=\{R_a:a\in G\} \cong G$.
In particular, the isometry group acts regularly on $G$.

(vi) {\bf The metric determines the group structure.}
Suppose now that $(G,\circ)$ is any other group structure 
on the same underlying metric space $(G,d)$ and that 
every right translation $R_a^\circ: x \mapsto x\circ a$ 
is an isometry. The group $R^\circ(G)=\{R_a^\circ:a\in G\}$
is a subgroup of ${\rm Iso}(G,d)$, and 
it acts regularly, hence transitively, on G.

But ${\rm Iso}(G,d)=R(G)$ itself acts regularly. A transitive 
subgroup of a regular permutation group must be the whole 
regular group: indeed, the orbit of $1$ under a 
subgroup $H \leq R(G)$ has cardinality $|H|$, and 
it consists exactly of the elements represented 
by the corresponding translations. 
If the orbit is all of $G$, then $H=R(G)$
and $R^\circ(G)=R(G)$. Consequently, the group 
$(G,\circ)$ is isomorphic to the original group $G$. 
Hence $G$ is natural.
\end{proof}

\section{Applying the structure theorem}

\paragraph{}
To use the structure theorem, we looked for classes
of groups. We illustrate this with the classes of  
{\bf Non-Metabelian}, {\bf solvable}, {\bf simple}, 
{\bf Kazhdan}, {\bf ICC}, {\bf non-sofic} or 
{\bf Tarski Moster} groups. All of the following results
are just relying on definitions and are nice "soft" exercises
in group theory. We list them because we ourselves had 
fun with them.

\paragraph{}
The {\bf commutator group} $G'$ of $G$ is the group generated by 
all {\bf commutators} $[g,h] = g^{_1} h^{-1} g h$. Obviously
$G'=\{1\}$ is equivalent to the group being {\bf abelian}. 
If $G''=\{1\}$, the group $G$ is called {\bf metabelian}. One also uses the 
terminology that $G'$ is the {\bf derived group} and $G''$ is the 
{\bf second derived subgroup}. The quotient $G/G'$ is called 
the {\bf abelianization} of $G$. A group is called {\bf perfect} 
if $G'=G$. A group is called {\bf solvable} if some derived group $G^{(n)}$ 
is trivial. Obviously, a non-solvable group is non-abelian and so
non-metabelian. 

\begin{coro}
Every non-metabelian group of cardinality $\leq c$ is natural.
\end{coro}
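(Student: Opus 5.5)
The plan is to reduce the corollary to the Structure theorem. If $G$ is non-metabelian, then $G''\neq\{1\}$, so in particular $G'\neq\{1\}$ and $G$ is non-abelian. Since $|G|\leq c$ by assumption, the Structure theorem applies. It therefore suffices to show that a non-metabelian group cannot be generalized dicyclic. Equivalently, we show that every generalized dicyclic group ${\rm Dic}(A,y)$ is metabelian.

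To see this, let $G={\rm Dic}(A,y)$ be generated by the abelian group $A$ and an element $x$ with $x^2=y\in A$ and $xax^{-1}=a^{-1}$ for all $a\in A$. The first step is to show that $A$ is a normal subgroup of index $2$ and that $G=A\cup xA$. The relation $xax^{-1}=a^{-1}$ shows that conjugation by $x$ preserves $A$. Since $x^2\in A$, every word in $A\cup\{x,x^{-1}\}$ reduces to an element of $A$ or of $xA$. The quotient $G/A$ is generated by the image of $x$, whose square is trivial, so $G/A$ has order at most $2$ and is in particular abelian. Hence $G'\subseteq A$. As $A$ is abelian, $G'$ is abelian, and therefore $G''=[G',G']=\{1\}$.

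One can also compute this directly. For $a\in A$ we have $[x,a]=x^{-1}a^{-1}xa=a^2$ (using $x^{-1}a^{-1}x=a$), so $G'=\{a^2: a\in A\}=2A$ in additive notation. This is visibly abelian.

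Combining the two parts: a non-metabelian $G$ with $|G|\leq c$ is non-abelian and not generalized dicyclic, so the Structure theorem gives that $G$ is natural. There is no real obstacle in this argument. The only point needing care is the normal form $G=A\cup xA$, that is, that $A$ has index at most $2$. This follows from the defining relations, since conjugation by $x$ maps $A$ to itself, so $\langle A,x\rangle = A\langle x\rangle$, and $x^2\in A$ collapses $\langle x\rangle$ modulo $A$ to a group of order at most $2$.
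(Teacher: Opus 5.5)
Your proposal is correct and follows the paper's proof. The paper also just notes that generalized dicyclic groups are metabelian and then invokes the Structure theorem. You additionally make explicit that non-metabelian implies non-abelian, and you verify the metabelian claim via $G' \subseteq A$ (indeed $G'=\{a^2 : a\in A\}$), which the paper only asserts.
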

\begin{proof}
Since generalized dicyclic groups are metabelian,
non-metabelian are not dicyclic. The structure
theorem applies.
\end{proof}
\paragraph{}
In other words, if a $G$ of not too large cardinality has 
an non-abelian commutator $G'$, then it is natural. We need only
to find four elements $a,b,c,d$ such that $[ [a,b],[c,d] ] \neq 1$. This handy test
is not necessary however as the example $G=S_3=D_3$ shows. 
The symmetric group $D_3$ is natural but $G''=1$ so that 
$[[a,b],[c,d]]=1$ for all $a,b,c,d$. The handy test
fails in many dihedral cases, like also for the infinite dihedral group
$G=D_{\infty}=C_2 * C_2$ which has 
$G'=\mathbb{Z}$ and $G''=\{1\}$. But $D_{\infty}$ is natural.

\paragraph{}
Here are some examples: if $G$ is not metabelian of cardinality smaller or equal than the
continuum, then $\oplus_{r \in \mathbb{R}} G$ is also not matabelian, and of 
cardinality not larger than the continuum. It is still non-natural.
The quaternion group $Q$ is non-abelian and metabelian. 
It is also not-natural. It is generalized dicyclic however.
The group $D_{\infty}$ is non-abelian and metabelian but natural.

\paragraph{}
Nice applications for the structure theorem 
are homeomorphism groups of manifolds, diffeomorphism groups of 
manifolds, syplectomorphism groups of symplectic manifolds 
or automorphism groups of Lebesgue probability spaces. In all cases, the 
structure theorem allows to bypass the sometimes subtle question, whether 
the group is a reflection group.

\paragraph{}
An other twist is to consider {\bf solvable groups}. Since generalized dicyclic groups 
$G={\rm Dic}(A,y)$ is metabelian, it is solvable. Reversing this gives

\begin{coro}
A non-abelian non-solvable group of cardinality $\leq c$ is natural. 
\end{coro}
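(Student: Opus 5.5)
The plan is to reduce this corollary to the Structure theorem, exactly as the previous corollary was reduced. By the Structure theorem, a non-abelian group $G$ with $|G|\leq c$ is natural as soon as it is not generalized dicyclic. So it suffices to show that every generalized dicyclic group is solvable. Then a non-solvable $G$ cannot be generalized dicyclic. Non-abelianness is part of the hypothesis; it is in any case automatic, since abelian groups are solvable with $G'=\{1\}$.

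The key step is to verify directly that $G={\rm Dic}(A,y)$ is metabelian, i.e.\ $G''=\{1\}$. The subgroup $A$ has index $2$ in $G$, because every element is of the form $a$ or $xa$ with $a\in A$. Hence $A$ is normal and $G/A\cong C_2$ is abelian, which gives $G'\subseteq A$. Since $A$ is abelian, $G'$ is abelian, so $G''=\{1\}$. This makes $G$ solvable of derived length at most $2$. One can also see the inclusion $G' \subseteq A$ concretely: $[x,a]=x^{-1}a^{-1}xa=a^2\in A$, and commutators of two elements of $A$ are trivial.

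Taking the contrapositive, a non-solvable group is not generalized dicyclic. It is non-abelian by hypothesis (or trivially, as noted), and it has cardinality at most $c$. The Structure theorem therefore yields naturality.

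There is essentially no obstacle. The only point needing a line of justification is the normal form $G=A\cup xA$ for ${\rm Dic}(A,y)$, which follows from the relations $x^2=y\in A$ and $xax^{-1}=a^{-1}\in A$. Alternatively, the result is immediate from the previous corollary, since a non-solvable group is in particular non-metabelian.
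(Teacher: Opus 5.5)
Your proposal is correct and follows the paper's route. Generalized dicyclic groups are metabelian, hence solvable, so by contraposition a non-solvable group is not generalized dicyclic, and the Structure theorem applies. Along the way you supply the justification that ${\rm Dic}(A,y)$ is metabelian via the index-two abelian subgroup $A$, which the paper only asserts, and you state the contrapositive correctly; the paper's one-line proof literally reads ``a solvable group can not be generalized dicyclic,'' which is a slip for the non-solvable direction.
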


\begin{proof}
A solvable group can not be generalized dicyclic.
\end{proof} 

\paragraph{}
Here is a cute remark which deals with one of the deepest mathematical
achievements of the last century, the {\bf classification of all finite simple groups}.

\begin{coro}
Every finite simple group $G$ is natural.
\end{coro}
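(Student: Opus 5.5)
We split according to whether $G$ is abelian. If $G$ is abelian, simplicity forces every subgroup to be normal, so $G$ has no proper nontrivial subgroups. This means $G \cong C_p$ for a prime $p$. As noted in the introduction, there is only one group of order $p$ by Lagrange's theorem. So any group structure $(G,\circ)$ on the same set is isomorphic to $G$, and naturality holds for any metric, for instance the discrete one.

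If $G$ is non-abelian, we want to apply the structure theorem. Finiteness gives $|G| \leq c$ trivially, so it only remains to check that $G$ is not generalized dicyclic. The cleanest route is through the corollary on non-abelian non-solvable groups, or the one on non-metabelian groups. The commutator subgroup $G'$ is normal and nontrivial, since $G$ is non-abelian. Simplicity then gives $G'=G$, so $G$ is perfect and $G^{(n)}=G\neq 1$ for all $n$. Hence $G$ is non-solvable, in particular non-metabelian, and the corollary applies directly.

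For robustness we can also argue directly, without the corollaries. A generalized dicyclic group ${\rm Dic}(A,y)$ contains $A$ as a subgroup of index $2$, which is therefore normal. Simplicity would force $A=1$ or $A=G$. But $A\neq 1$ because $y\neq 1$ lies in $A$, and $A\neq G$ because $A$ is abelian while $G$ is not. This contradiction shows $G$ is not generalized dicyclic, and the structure theorem gives naturality.

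There is no real obstacle. The only point needing care is that we never invoke the classification of finite simple groups: the argument uses only simplicity, through $G'=G$ or the absence of an index-$2$ normal subgroup. In fact the same argument proves the stronger statement that every simple group of cardinality $\leq c$ is natural. The abelian case is handled separately because the structure theorem assumes $G$ is non-abelian.
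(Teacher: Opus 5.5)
Your proposal is correct and follows essentially the same route as the paper. In the abelian case you get $C_p$ and use uniqueness of the group of order $p$. In the non-abelian case you use $G'=G$, hence non-solvable and non-metabelian, hence not generalized dicyclic, and then apply the structure theorem with $|G|<\infty\leq c$. Your extra argument is also valid and slightly more direct: the abelian subgroup $A$ of index $2$ in ${\rm Dic}(A,y)$ would be a proper nontrivial normal subgroup, which simplicity rules out.
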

\begin{proof}
If $G$ is abelian, then its order $|G|$ is prime and it is natural.
If $G$ is non-abelian, use that it is not solvable and so not-metabelian
and so not generalized dicyclic and by the structure theorem
not abelian.
\end{proof}

And similarly

\begin{coro}
Every simple group with $|G| \leq c$ is natural. 
\end{coro}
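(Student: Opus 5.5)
We split into the abelian and the non-abelian case, exactly as in the proof for finite simple groups, but replacing the finiteness argument by a purely algebraic one.

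\textbf{Abelian case.} If $G$ is simple and abelian, every subgroup is normal. So for any $g\neq 1$ the cyclic subgroup $\langle g\rangle$ must equal $G$. Thus $G$ is cyclic. If $G\cong\mathbb{Z}$, then $2\mathbb{Z}$ is a proper nontrivial normal subgroup, a contradiction. Hence $G\cong C_n$, and simplicity forces $n=p$ prime, since any divisor of $n$ gives a subgroup. A group of prime order is natural, as noted in the introduction via Lagrange's theorem. (The trivial group is excluded by the convention that simple groups are nontrivial.) No cardinality assumption is needed here.

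\textbf{Non-abelian case.} Here we apply the Structure theorem, using the hypothesis $|G|\leq c$. It suffices to show that $G$ is not generalized dicyclic, and the cleanest route is through the corollary on non-metabelian groups. The commutator subgroup $G'$ is normal. If $G'=\{1\}$, then $G$ would be abelian, so $G'=G$ by simplicity. Then $G''=G'=G\neq\{1\}$, so $G$ is not metabelian. Since every generalized dicyclic group is metabelian, $G$ is not generalized dicyclic. The corollary on non-metabelian groups of cardinality $\leq c$, which is itself the Structure theorem, then gives naturality.

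\textbf{Main obstacle.} The only step needing care is the claim that ${\rm Dic}(A,y)$ is metabelian. This is already used in the earlier corollaries, but it can be checked directly. The subgroup $A$ has index $2$, so it is normal with abelian quotient; hence $G'\subseteq A$ is abelian and $G''=\{1\}$.

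Alternatively, one can avoid metabelianity altogether. The subgroup $A$ is a proper nontrivial normal subgroup of ${\rm Dic}(A,y)$: it is nontrivial because $y\in A$, and proper because $x\notin A$. So a generalized dicyclic group is never simple. This one-line argument is the one I would actually write.
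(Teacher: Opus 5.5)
Your proof is correct and follows the paper's own argument. The abelian case reduces to groups of prime order. In the non-abelian case, simplicity gives $G''=G'=G\neq\{1\}$, so $G$ is not metabelian, hence not generalized dicyclic, and the Structure theorem applies.

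Two of your additions are also sound. Your explicit cyclic-of-prime-order argument fills in the paper's terse remark that there are no infinite simple abelian groups. Your alternative observation, that the index-two subgroup $A$ is a proper nontrivial normal subgroup of ${\rm Dic}(A,y)$, is an equally valid and slightly more direct way to rule out the dicyclic case.
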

\begin{proof}
There are no infinite simple abelian groups. Simplicity 
implies non-metabelian and so not generalized dicyclic and by 
structure theorem, it must be natural.
\end{proof}

\paragraph{}
For a topological group $G$, {\bf Kazhdan’s property} means that every 
continuous unitary representation having almost invariant vectors must have 
a nonzero invariant vector. Given an abstract group, we say $G$ is 
{\bf discrete Kazhdan}, if it is Kazhdan for the discrete topology.
We gloss here over the details. The point is that Kazhdan implies non-solvable.

\begin{coro}
Infinite, discrete Kazhdan groups are natural.
\end{coro}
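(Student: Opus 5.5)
We reduce the corollary to the corollary on non-abelian non-solvable groups of cardinality $\leq c$. Two things must be checked: an infinite discrete Kazhdan group $G$ has $|G| \leq c$, and $G$ is non-abelian and non-solvable. Both use the standard fact that a discrete Kazhdan group is finitely generated. For countable discrete groups this is classical (Kazhdan). For an arbitrary discrete group one argues as follows. Assume $G$ is not finitely generated and write $G$ as the directed union of its finitely generated subgroups $H$. The quasi-regular representation $\bigoplus_H \ell^2(G/H)$ then has almost invariant vectors, namely the indicators $\delta_{H}$ for large $H$. It has no invariant vector, because each $G/H$ is infinite: if $[G:H]<\infty$ for a finitely generated $H$, then $G$ would itself be finitely generated. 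This contradicts property (T). So $G$ is finitely generated, hence countable, and $|G| \leq \aleph_0 \leq c$.

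Next we show that $G$ is not solvable, arguing by contradiction. Suppose $G$ is solvable. Property (T) passes to quotients, and a discrete abelian group with (T) is finite: its unitary dual is the compact dual group, and (T) forces the trivial character to be isolated, so the dual is discrete and compact, hence finite. Consequently $G/G'$ is finite, so $G'$ has finite index in $G$. Property (T) passes to finite-index subgroups, so $G'$ is again Kazhdan and finitely generated. Iterating, each $G^{(k)}$ is Kazhdan with finite abelianization, so each step of the derived series has finite index. If $G^{(n)}=1$, then $G$ is a finite extension of finite extensions, and therefore finite. This contradicts the assumption that $G$ is infinite. Hence $G$ is non-solvable, and in particular non-abelian.

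Finally, the preceding corollary applies: a non-abelian non-solvable group of cardinality $\leq c$ is natural. It would also suffice to note that $G$ is non-metabelian and apply the first corollary.

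The main obstacle is the input from Kazhdan theory: finite generation in the uncountable discrete case, and the finite-index and quotient heredity of (T). Since the paper states that it glosses over these details, I would cite them as standard facts from Bekka–de la Harpe–Valette rather than reprove them. The only group-theoretic work genuinely needed is the solvability argument, which is the finite-abelianization induction above.
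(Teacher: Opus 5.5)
Your argument is correct. Its skeleton matches the paper's: establish $|G|\le c$ through finite generation, show $G$ is non-solvable, and apply the corollary on non-abelian non-solvable groups.

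The genuine difference is the non-solvability step. The paper settles it in one line via amenability. Solvable groups are amenable, and an amenable discrete Kazhdan group is finite: amenability gives almost invariant vectors in $\ell^2(G)$, so (T) yields a nonzero invariant $\ell^2$ function. You bypass amenability altogether. You combine the heredity of (T) to quotients and to finite-index subgroups with the Pontryagin-duality fact that abelian Kazhdan groups are finite, and then descend the derived series. The paper's route is shorter and proves more, since no infinite amenable group has (T), solvable or not. Yours needs only the solvable case and avoids the amenability machinery. The price is that you invoke the finite-index heredity theorem, which itself requires an induced-representation argument.

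For finite generation you both use quasi-regular representations on coset spaces. The paper takes a finite Kazhdan set $Q$ and looks at $\ell^2(G/\langle Q\rangle)$, which presupposes that such a set exists. Your direct sum over all finitely generated subgroups works straight from the definition of (T) and handles uncountable $G$ without extra input.
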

\begin{proof}
Infinite discrete Kazhdan groups are non-amenable and therefore non-solvable 
and so natural. An other thing to check is cardinarlity.
But a discrete group G with the Kazhdan property 
must be finitely generated.
If $Q$ a compact Kazhdan set in $G$ then the group $H=<Q>$ must be already $G$,
as otherwise $G$ would act on $l^2(G/H)$ by right translation.
\end{proof}

\paragraph{}
A group $G$ is called an infinite conjugate class group {\bf ICC} if every $g \in G, g \neq 1$ has 
an infinite conjugacy class $\{ hgh^{-1}, h\in G \}$. ICC groups must be non-abelian. 

\begin{coro}
Every ICC group $G$ of cardinality $\leq c$ is natural.
\end{coro}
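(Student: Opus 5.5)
The plan is to reduce the statement to the structure theorem. An ICC group is non-abelian, and by assumption $|G| \leq c$. So it suffices to show that an ICC group is never generalized dicyclic; the structure theorem then gives naturality directly.

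\textbf{Key step: a dicyclic group has a nontrivial central element.} Suppose $G={\rm Dic}(A,y)$. The involution $y \in A$ is central:
\begin{itemize}
\item It commutes with all of $A$, because $A$ is abelian.
\item It commutes with $x$, because $x a x^{-1}=a^{-1}$ gives $x y x^{-1}=y^{-1}=y$. Alternatively, $y=x^2$ commutes with $x$ trivially.
\end{itemize}
Since $G$ is generated by $A$ and $x$, the element $y$ lies in the center $Z(G)$. Its conjugacy class is therefore $\{y\}$, which is finite. Since $y\neq 1$ by definition, this contradicts the ICC property. Hence no ICC group is generalized dicyclic.

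\textbf{Conclusion.} Let $G$ be ICC with $|G|\leq c$. Then:
\begin{itemize}
\item $G$ is non-abelian. In an abelian group every conjugacy class is a singleton, and $G \neq 1$ is implicitly assumed, since the ICC condition is meant for nontrivial groups.
\item $G$ is not generalized dicyclic, by the key step above.
\end{itemize}
The structure theorem therefore applies, and $G$ is natural.

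There is essentially no obstacle. The only care needed is with the degenerate trivial group: it is vacuously ICC but is natural anyway, so nothing breaks. I would not route the argument through the non-metabelian corollary. ICC groups can in principle be metabelian (for example, ${\rm Dih}$ of suitable abelian groups such as $D_\infty$ is ICC), so the center argument is the right tool rather than derived-series considerations.
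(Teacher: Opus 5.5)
Your proof is correct and follows the paper's route: rule out the generalized dicyclic case by exhibiting a nontrivial element with a finite conjugacy class, then invoke the structure theorem. The paper uses an arbitrary $a\in A$, whose class lies in $\{a,a^{-1}\}$, while you use the central involution $y$, whose class is $\{y\}$. One side remark is wrong, though: $D_\infty$, and indeed any $\mathrm{Dih}(A)$ with $A\neq 1$, is never ICC, because each $a\in A$ has conjugacy class $\{a,a^{-1}\}$; a genuine metabelian ICC example is the lamplighter group $C_2\wr\mathbb{Z}$.
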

\begin{proof}
We just need to know that ICC implies that it is not generalized dicyclic. 
Use contraposition and assume that $G$ is generalized dicyclic
$G={\rm Dic}(A,y)=\langle A,x | x^2=y, xax^{-1}=a^{-1} \forall a \in A \rangle$,
where $A$ is abelian and $y \in A$ has order $2$.
Take any $a \in A$. Conjugation by elements of $A$ does nothing to $a$, while conjugation by $x$ sends
$a \mapsto a^{-1}$ Hence its conjugacy class is contained in $\{a,a^{-1}\}$.
There are not infinitely many conjugacy classes. 
\end{proof}

\paragraph{}
Lets finally look at the property of {\bf sofic} which was in the news recently 
as there was a construction of a non-sofic group announced \cite{OpenAI2026TenAdvances}. 
There is now even a {\bf finitely generated non-sofic group}.
The pitch is that there is now a finitely presented group $G$ for which 
no sequence of finite labeled graphs can reproduce its Cayley graph 
locally on an asymptotically full proportion of vertices.

\paragraph{}
A group is called {\bf sofic} if it can be approximated well by finite symmetric groups $S_n$
in the following sense: for every finite $F \subset G$ and every $\epsilon>0$,
there is a map $\phi: F \to S_n$ such that $d(\phi(gh),\phi(g)\phi(h)) \leq \epsilon$,
whenever $g,h,gh$ are in $F$ and $d(\phi(g),1) \geq 1-\epsilon$. The metric $d$ on $S_n$
is the usual transformation metric on $S_n$: $d(S,T)=P[ \{ S(x) \neq T(x) \} ]$ with uniform
probability measure $P$ on $\{1, \dots, n\}$. 

\begin{coro}
Non-sofic groups are natural.
\end{coro}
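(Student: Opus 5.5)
The plan is to reduce the statement to the structure theorem, in the same spirit as the previous corollaries. Two things must be checked for a non-sofic group $G$: that $G$ is non-abelian and not generalized dicyclic, and that $|G| \leq c$.

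For the algebraic part I will use the standard fact that amenable groups are sofic. A group is sofic as soon as every finitely generated subgroup is sofic, and amenable groups are sofic because F\o lner sets give almost-actions on finite sets, hence maps into $S_n$. Abelian groups are amenable, and so are solvable groups, being extensions of amenable groups by amenable groups. In particular every metabelian group is amenable. Since every generalized dicyclic group ${\rm Dic}(A,y)$ is metabelian (with $A$ abelian of index $2$), it is amenable and therefore sofic. Contraposition then shows that a non-sofic group is non-abelian, indeed non-solvable, and hence not generalized dicyclic. Equivalently, by the corollary on non-solvable groups, naturality follows once the cardinality is controlled.

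The cardinality is the main obstacle, because a non-sofic group need not be countable. For instance, $G = H \times K$ with $H$ non-sofic and $K$ an arbitrary abelian group of huge cardinality is still non-sofic. Soficity passes to subgroups, so $G$ fails to be sofic exactly when some finitely generated subgroup fails. Without a bound on $|G|$, the structure theorem does not apply directly.

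I would therefore proceed as follows. First, prove the statement for non-sofic groups with $|G| \leq c$; this covers all countable and, in particular, all finitely generated non-sofic groups, such as the finitely presented example mentioned above. Second, for general $|G|$, I would try to avoid the structure theorem, since its metric construction is where cardinality enters. The fallback is to state the corollary under the standing assumption $|G| \leq c$, in line with the other corollaries in this section. I expect this to be how the statement is meant to be read.
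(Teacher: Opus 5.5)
Your algebraic reduction is exactly the paper's. Amenable groups are sofic, metabelian groups are amenable, and ${\rm Dic}(A,y)$ is metabelian, so a non-sofic group is non-abelian and not generalized dicyclic. The structure theorem then applies once $|G| \leq c$.

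The two arguments differ only at the cardinality step, and there you are right and the paper is not. The paper finishes with ``non-sofic groups are countable.'' But its definition of soficity quantifies over finite subsets $F \subset G$, so soficity passes to subgroups. Hence your example $H \times K$, with $H$ non-sofic and $K$ abelian of cardinality $> c$, is a non-sofic group to which the structure theorem does not apply. So the paper's proof, like yours, only establishes the corollary for $|G| \leq c$. Alternatively it holds under the convention, common in the literature, that soficity is only considered for countable groups.

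What your proposal does not do is prove the unrestricted statement: ``try to avoid the structure theorem'' is not an argument. The locality of soficity gives no shortcut either. Knowing that a countable non-sofic subgroup is natural says nothing about a metric on the whole group. Closing the general case would require constructing the metric without giving distinct real labels to the $|G|$ many colors $\{g,g^{-1}\}$. That amounts to removing the cardinality hypothesis from the non-abelian structure theorem, which the paper itself lists as open.

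So commit to your fallback. With the hypothesis $|G| \leq c$ added, your proof is complete and matches what the paper's argument actually justifies.
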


\begin{proof}
Every metabelian group is solvable, every solvable group is amenable, and every
amenable group is sofic. So, If $G$ is non-sofic, then it is non-metabelian.
Since non-sofic groups are countable, hence natural.
\end{proof}
\paragraph{}
In the context of non-sofic groups, {\bf Tarski monsters} have
come up as candidates. These are infinite groups for which all subgroups have 
prime power order and if there exists a prime $p$ that every non-trivial proper subgroup of $T$
has order $p$. Despite that the subgroup lattice of a Tarski monster $G$ is very 
simple $1 \to C_p \to G$, they are still candidates for being non-sofic. But none has been found 
so far. 

\begin{coro}
Tarski monsters are natural.
\end{coro}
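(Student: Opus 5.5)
We reduce the corollary to the Structure theorem. Two hypotheses have to be checked: the cardinality bound $|G| \leq c$, and that a Tarski monster $T$ is non-abelian and not generalized dicyclic. The plan is first to record the basic properties of $T$ that follow from the definition. $T$ is infinite, and every nontrivial proper subgroup has prime order $p$. Any element $g \neq 1$ generates a proper subgroup (if $\langle g \rangle = T$ then $T$ is cyclic and infinite, so $T \cong \mathbb{Z}$, which has proper subgroups $2\mathbb{Z}$ of infinite order — a contradiction). Hence every nontrivial element has order $p$.

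\textbf{Cardinality.} Pick $g \neq 1$ and $h \notin \langle g \rangle$. Then $\langle g,h \rangle$ strictly contains $\langle g \rangle \cong C_p$. It cannot be a proper subgroup, since it would then have order $p$ and equal $\langle g \rangle$. So $T = \langle g,h \rangle$ is $2$-generated, hence countable, and $|T| \leq \aleph_0 < c$.

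\textbf{Non-abelian.} If $T$ were abelian, then $\langle g,h \rangle$ would be a quotient of $C_p \times C_p$, hence finite. This contradicts that $T$ is infinite.

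\textbf{Not generalized dicyclic.} Suppose $T = {\rm Dic}(A,y)$. Then $A$ has index $2$ in $T$. Since $T$ is infinite, $A$ is an infinite subgroup, so it is a proper subgroup of infinite order, which contradicts the Tarski property. Alternatively, ${\rm Dic}(A,y)$ contains the element $x$ of order $4$, which is not prime. A third route is to observe that $T$ is simple (a normal proper subgroup $N \cong C_p$ together with $g \notin N$ would generate a finite subgroup $N\langle g \rangle$ of order $p^2$, which is impossible). Then the Corollary on simple groups of cardinality $\leq c$ applies directly.

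Combining these, the Structure theorem gives naturality. The only step needing care is the preliminary claim that every element has prime order (excluding $T \cong \mathbb{Z}$ and elements of infinite order). This is handled by the observation above that infinite cyclic groups have infinite proper subgroups. I would present the simple-group route, with the index-$2$ argument as the direct check.
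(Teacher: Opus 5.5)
Your proposal is correct and follows essentially the paper's route: the choice $h \notin \langle g \rangle$ gives two-generation and hence countability, and simplicity excludes the generalized dicyclic case before the structure theorem (or the simple-group corollary) is invoked. You also prove simplicity and the prime-order property, which the paper only asserts, and your index-$2$ observation is a slightly more direct way to rule out ${\rm Dic}(A,y)$.
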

\begin{proof}
Tarski monsters are simple and so not-metabelian. 
They also can be generated by two elements: If $a,b$ are not in the same
subgroup of order $p$, then $H=\langle a,b \rangle$ can not be a proper 
nontrivial subgroup and so must be $G$. So the cardinality condition is satisfied.
\end{proof} 

\paragraph{}
The regular {\bf Kranz product} $G=A \wr B = A^{(B)} \rtimes B$ has become prominent
in the lamplighter group $C_2 \wr \mathbb{Z}$ or within the 
Rubik group which sits within $(C_3 \wr S_8) \times (C_2 \wr S_{12})$.

\begin{coro}
If $A,B \neq 1$, and $|A \wr B| <c$, then $A \wr B$ is natural.
\end{coro}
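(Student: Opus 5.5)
The plan is to invoke the structure theorem. Put $G=A\wr B=N\rtimes B$ with $N=A^{(B)}$. The hypothesis $|A\wr B|<c$ already gives the cardinality bound, so it suffices to show two things: $G$ is non-abelian, and $G$ is not generalized dicyclic.

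\emph{Non-abelian.} Pick $1\neq a\in A$ and $1\neq b\in B$. Let $f\in N$ be supported at the single coordinate $1\in B$ with value $a$. Conjugating by $b$ moves the support to the coordinate $b\neq 1$, so $bfb^{-1}\neq f$.

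\emph{Not generalized dicyclic.} Suppose, for contradiction, that $G={\rm Dic}(A',y)$, where $A'$ is the abelian subgroup of index $2$. I will use three facts about ${\rm Dic}(A',y)$.
\begin{enumerate}
\item Every $g\notin A'$ satisfies $g^2=y$.
\item $y\neq 1$.
\item $y$ is central, since $y=x^2$ commutes with $x$ and $y\in A'$.
\end{enumerate}
Write elements of $G$ as $(f,b)$ with $f\in N$ and $b\in B$.

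\emph{Step 1: identify $y$.} I first compute the center of $G$. If $B$ is infinite, the center of $A\wr B$ is trivial for nontrivial $A$. Indeed, a central $(f,b)$ with $b\neq 1$ fails to commute with a suitable single-support function, and a central $(f,1)$ must have $f$ invariant under the shift action of $B$; with finite support and $B$ infinite this forces $f=1$. This contradicts $y\neq 1$. So $B$ is finite. Then the same computation shows that $y$ must be a constant function $y=(c,c,\dots,c)\in N$ with $1\neq c\in Z(A)$. In particular $y$ has trivial $B$-component, and \emph{every} coordinate of $y$ is nontrivial, where there are $|B|\geq 2$ coordinates.

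\emph{Step 2: $N\subseteq A'$.} Take any single-support element $(f,1)$, with support one coordinate. Its square is $(f^2,1)$, which is supported on at most one coordinate. Since $|B|\ge 2$, this square cannot equal $y$. By fact 1, $(f,1)\in A'$. Single-support elements generate $N$, so $N\subseteq A'$.

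\emph{Step 3: $B\subseteq A'$.} Take $(1,b)$ with $b\in B$. Its square is $(1,b^2)$, whose $N$-component is trivial. The $N$-component of $y$ is nontrivial, so this square is not $y$. By fact 1, $(1,b)\in A'$.

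\emph{Conclusion.} Steps 2 and 3 give $G=\langle N,B\rangle\subseteq A'$, so $G$ would be abelian. This contradicts the non-abelian step. The structure theorem then yields naturality.

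The main obstacle is Step 1, namely the center computation for $A\wr B$ (with $A$ possibly non-abelian) and the resulting precise shape of the central involution $y$. Everything afterwards is a short check using the rule that elements outside $A'$ square to $y$.
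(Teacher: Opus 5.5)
Your proof is correct, but it takes a different route from the paper's. Both arguments reduce to the structure theorem. The paper excludes the generalized dicyclic case by a three-way case split on involutions. It uses that in ${\rm Dic}(A',y)$ every involution lies in $A'$ and is therefore central, and then exhibits a non-central involution in $A\wr B$: the element $(1,b)$ if $B$ has an involution $b$, or a single-coordinate involution of the base if only $A$ has one. If neither $A$ nor $B$ has an involution, then $A\wr B$ has none at all, which contradicts the existence of $y=x^2\neq 1$.

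You instead pin down $y$ through the center of the wreath product, which is trivial for infinite $B$ and consists of the constant $Z(A)$-valued functions for finite $B$. You then use the rule $g^2=y$ for $g\notin A'$ to force both $N$ and $B$ into $A'$, which makes $G$ abelian.

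The paper's route is shorter and needs no center computation, only the observation that involutions in a generalized dicyclic group are central. Yours needs the center calculation, but it avoids the case distinction and never uses that $y$ is an involution. It only uses that $y$ is a nontrivial central element which is the common square of all elements outside $A'$. You also check explicitly that $A\wr B$ is non-abelian, which the paper simply asserts.
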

\begin{proof}
Check the structure theorem assumptions. The Kranz product is always 
nonabelian, the cardinality works. It can never be generalized dicyclic.
Case (i): If $B$ has an involution $b$, then
$(1,b) \in A \wr B$ is an involution, and it is non-central because $b$ moves coordinates in $A^{(B)}$. 
But every involution in a generalized dicyclic group is central. \\
Case (ii) If $B$ has no involutions but $A$ has an involution $a$, put a in a single coordinate 
of $A^{(B)}$. This gives an involution of $A\wr B$. It is again non-central because B moves that coordinate. \\
Case (iii) Finally, if neither $A$ nor $B$ has an involution, then $A \wr B$ has no involutions 
at all: an involution would project to an involution of B, so it must lie in the base, but the 
base has none. A generalized dicyclic group necessarily contains its distinguished central 
involution $y=x^2 \neq 1$ however. 
\end{proof}

\paragraph{}
An amusing example is $\mathbb{R} \wr \mathbb{R}=\oplus_{\mathbb{R}} \mathbb{R} \rtimes \mathbb{R}$ 
which has no involutions and which is metabelian. Its cardinality is $c$.
It still can not be generalized dicyclic because the later contain involutions.  

\paragraph{}
Let $A*B$ be the free product. It appears in examples like $F_2 = \mathbb{Z} * \mathbb{Z}$ or
$D_{\infty}= C_2 * C_2$ or $PSL(2,\mathbb{Z}) = C_2 * C_3$. 

\begin{coro}
The free product of two groups $1<|A|,|B| <c$ is always natural.
\end{coro}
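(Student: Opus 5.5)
We check the hypotheses of the structure theorem, as in the Kranz product corollary. The free product $A*B$ with $1<|A|,|B|$ is non-abelian: for $a\in A\setminus\{1\}$, $b\in B\setminus\{1\}$ the reduced words $ab$ and $ba$ are distinct by the normal form theorem. Its cardinality is also within range. Every element is a finite reduced alternating word in $A\cup B$, so $|A*B|\leq \sum_n (|A|+|B|)^n \leq \max(\aleph_0,|A|,|B|)\leq c$. It remains to show that $A*B$ is never generalized dicyclic. The structure theorem then gives naturality.

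The key structural input is that a nontrivial free product has trivial center unless $|A|=|B|=2$. In a generalized dicyclic group ${\rm Dic}(A',y)$, on the other hand, the element $y=x^2$ is a central involution. The plan is therefore to split into two cases.

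\emph{Case 1: $|A|=|B|=2$.} Then $A*B=C_2*C_2=D_\infty$, which is generated by involutions. Generalized dicyclic groups are never reflection groups, as remarked earlier in the paper. Alternatively, one can apply the reflection group theorem or the theorem on ${\rm Dih}(\mathbb{Z})$ directly to get naturality.

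\emph{Case 2: say $|B|\geq 3$.} We show $Z(A*B)=1$. Take a nontrivial reduced word $w$ and, using the normal form, find a letter $u\in A\cup B$ with $uw\neq wu$.
\begin{itemize}
\item If $w$ begins and ends in different factors, a letter from the factor not equal to the first letter's factor gives reduced words of different lengths.
\item If $w$ begins and ends in the same factor, say with first letter $g_1$ and last letter $g_n$, choose $u$ in the other factor. Then $uw$ and $wu$ are both reduced, of length $n+1$, with different first letters.
\item If $w$ is a single letter $a\in A$, pick $b\in B\setminus\{1\}$. Then $ab\neq ba$.
\end{itemize}
So the center is trivial. Hence there is no central involution, and $A*B$ is not generalized dicyclic.

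The main obstacle is this bookkeeping with reduced words in the center computation, in particular the single-letter and same-factor cases. It is routine from the normal form theorem, but it must be done carefully. Two alternative routes would also suffice to rule out the dicyclic structure. One can use that every element of ${\rm Dic}(A',y)$ outside $A'$ squares to $y$. One can also use that in a free product every finite-order element is conjugate into a factor, so an element $x$ of order $4$ and its square $y$ would both lie in a conjugate of $A$ or $B$, while conjugation by $x$ must invert the index-$2$ abelian subgroup $A'$. The trivial-center argument seems cleanest and is the one I would write.
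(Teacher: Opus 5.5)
Your proof is correct, but it excludes the dicyclic structure with a different invariant from the one the paper uses.

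\textbf{The paper's route.} The paper observes that ${\rm Dic}(A',y)$ has an abelian subgroup of index $2$, so it is virtually abelian. It then asserts that a nontrivial free product is not virtually abelian, except for $C_2*C_2=D_\infty$, which it handles separately through the reflection group theorem. The paper does not prove that assertion. It is a standard fact, but it needs a subgroup-level argument, for example that the kernel of $A*B\to A\times B$ is a non-abelian free group unless $|A|=|B|=2$.

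\textbf{Your route.} You use that $y=x^2$ is a nontrivial central element of every generalized dicyclic group. You then show $Z(A*B)=1$ by a direct normal-form computation. This is elementary and self-contained. You also explicitly check non-commutativity and the cardinality bound $|A*B|\leq c$, which the paper's proof leaves implicit.

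\textbf{Your case split is unnecessary.} Your centre computation never uses $|B|\geq 3$. Both of your first two cases work verbatim for $C_2*C_2$, and $D_\infty$ does have trivial centre. So the trivial-centre argument covers all nontrivial free products uniformly. Your separate treatment of $D_\infty$, and with it any appeal to the reflection theorem, is superfluous. Likewise, your single-letter case is just the case $n=1$ of your same-factor case.
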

\begin{proof}
A generalized dicyclic group has an abelian subgroup of index 2; in particular,
it is virtually abelian (contains an abelian subgroup of finite index). 
But a nontrivial free product $A*B$ is not virtually abelian, 
with one exceptional looking case: $C_2*C_2=D_\infty$ which we know already to be natural.
\end{proof}

\section{Abelian groups}

\paragraph{}
It is harder for an abelian group to be natural. We have seen
for example that the {\bf dyadic group of integers} $\mathbb{Z}_2$ is not natural 
while the {\bf $p$-adic groups of integers} $\mathbb{Z}_p$
are natural for odd primes $p$. We have also seen that any connected
Lie group is natural and so in particular also abelian Lie groups
like $\mathbb{T}^n$ or $\mathbb{R}^n$ even so they are not 
reflection groups. Lets start with restating the abelian 
structure theorem \cite{GraphsGroupsGeometry}:

\begin{thm}[Abelian structure theorem]
A finitely generated abelian group is natural $\Leftrightarrow$
$G$ is finite with $|G|$ odd or $G=C_2^r$.
\end{thm}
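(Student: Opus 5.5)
We prove the two directions separately. For the easy direction ($\Leftarrow$) we use results already available in the paper. If $G=C_2^r$, then $G$ is generated by involutions, so it is natural by the reflection group theorem of \cite{Sutherland2026}. Now let $G$ be finite abelian of odd order. We use the same colored complete Cayley graph metric as in step (i) of the proof of the structure theorem: $d(x,y)=\lambda_{[yx^{-1}]}$, with a distinct weight for each color $[g]=\{g,g^{-1}\}$. Let $T$ be an isometry fixing $0$ (writing $G$ additively). Then $T$ preserves colors, so $T(g)\in\{g,-g\}$ for every $g$. Since $|G|$ is odd, $g\neq -g$ for all $g\neq 0$, so each color class has exactly two elements. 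Here we would like to invoke the Leemann--de la Salle orientation rigidity theorem from the Appendix, but it excludes exactly the abelian groups with elements of order $>2$. So we argue directly instead: the map $x\mapsto -x$ is always a color-preserving isometry. The isometry group therefore contains $\mathrm{Dih}(G)$, and it is at least not regular.

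This means the naive metric fails, and a competitor argument is needed in the odd case. The plan is to show that every regular subgroup $H$ of the stabilizer-extended group $\mathrm{Iso}(G,d)$ is isomorphic to $G$. First we show that color preservation forces every isometry fixing $0$ to be of the form $x\mapsto \epsilon(x)x$ with $\epsilon(x)=\pm1$. Then we check the colors of the differences $T(x)-T(y)$, which forces $\epsilon$ to be constant on the components where $x\pm y$ are distinguishable. This gives $\mathrm{Iso}(G,d)=G\rtimes_{-1}C_2=\mathrm{Dih}(G)$, at least after choosing generic weights. A regular subgroup $H$ of $\mathrm{Dih}(G)$ has index $2$, and since $|G|$ is odd, $H$ is the unique subgroup of index $2$. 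That subgroup is the translation group $G$, so $H\cong G$. The main obstacle is proving rigidity of the stabilizer, namely that $\epsilon$ is globally constant. I would attempt it by considering the triangle $0,x,y$: the color of $y-x$ must match the color of $\epsilon(y)y-\epsilon(x)x$. If $\epsilon(x)\neq\epsilon(y)$, this forces $x+y\in\{\pm(y-x)\}$, hence $2x=0$ or $2y=0$, which is impossible for odd order unless $x=0$ or $y=0$.

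For $\Rightarrow$, we assume $G$ is finitely generated abelian and not of the two listed types. Then $G$ has an element of order $>2$, and either $G$ is infinite or $|G|$ is even. By the classification, $G\cong\mathbb{Z}^m\oplus F$ where $F$ is finite. We produce a non-isomorphic regular competitor for an arbitrary metric $d$ with right-invariance, in the style of the $(\mathbb{Z},+)$ example in the introduction. For every invariant metric, the inversion $i(x)=-x$ is an isometry, because $d(-x,-y)=d(0,x-y)=d(y,x)=d(x,y)$. Hence $\mathrm{Dih}(G)=G\rtimes\langle i\rangle$ acts by isometries. Suppose $G=A\oplus\mathbb{Z}$ (the case $m\ge1$). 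Take $H=\langle x\mapsto x+(0,2),\;x\mapsto (a,1)-x\text{-type maps}\rangle$, that is, dihedrate the $\mathbb{Z}$-factor exactly as in the $D_\infty$ argument while keeping translations by $A$. This group acts regularly, and it is non-abelian when $2A\neq0$ or when $\mathbb{Z}$ is present, so it is not isomorphic to $G$. Suppose instead $G$ is finite of even order with an element of order $>2$. We write $G=A\oplus C_{2^k}$, choosing a cyclic $2$-primary factor of maximal order $2^k$. If $k\ge2$, we take an index-$2$ subgroup $K\supset A$ and let $H=\{R_k\}_{k\in K}\cup\{R_g\circ i: g\notin K\}$, after verifying that it is closed and free. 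This gives a generalized dihedral-type group $K\rtimes C_2$, or a dicyclic-type group in the $Q_8$ situation. It is non-abelian because $K$ contains an element of order $>2$. If $k=1$, then the element of order $>2$ lies in $A$ of odd part, and we handle it with $K=A\oplus 0$, which has index $2$. Checking freeness of $R_g\circ i$ (it requires that no $x$ satisfies $g-x=x$, i.e.\ $g\notin 2G$) is the delicate point. We therefore choose $K\supseteq 2G$ of index $2$, which exists whenever $|G|$ is even.
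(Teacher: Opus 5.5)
Your route is essentially the one the paper relies on. For $C_2^r$ you use the reflection-group theorem. For odd order you use the distinct-weight complete Cayley metric. Your triangle argument correctly shows that the stabilizer of $0$ is $\{\mathrm{id},i\}$, and your index-two argument (equivalent to the paper's observation that $x\mapsto a-x$ has the fixed point $a/2$) then shows every regular subgroup is the translation group. This colored metric does not ``fail'': non-regularity of $\mathrm{Iso}(G,d)$ is harmless, since only its regular subgroups matter. For the converse, your competitor $\{x\mapsto x+k\}_{k\in K}\cup\{x\mapsto g-x\}_{g\notin K}$ on an index-two subgroup $K\supseteq 2G$ is exactly the dihedral competitor the paper uses for $\mathbb{Z}$ and for $C_2^r\oplus C_p^k$. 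It consists of isometries of every invariant metric because inversion does.

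One step, however, is false as written. In the case $k\ge 2$ you assert that $H$ is non-abelian because $K$ contains an element of order $>2$. For $G\cong C_4\oplus C_2^{j}$, including $G=C_4$ itself, your choice gives $K=A\oplus 2C_4\cong C_2^{j+1}$. Then $H\cong \mathrm{Dih}(K)\cong C_2^{j+2}$ is abelian; for $C_4$ it is the Klein four-group acting on the square. The conclusion still holds, but for a different reason: $H$ has exponent $2$, whereas $G$ has an element of order $4$.

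A clean way to settle all finite even cases at once is this. Every index-two subgroup $K$ automatically contains $2G$, and $\mathrm{Dih}(K)$ is either non-abelian (if $K$ has an element of order $>2$) or of exponent at most $2$. In both cases it is not isomorphic to the abelian group $G$, which has an element of order $>2$. This also makes your split into $k\ge 2$ and $k=1$ unnecessary.

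Finally, drop the remark about a dicyclic-type group in the $Q_8$ situation. Each map $x\mapsto g-x$ squares to the identity, so $H\cong K\rtimes_{-1}C_2$ always.
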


\paragraph{}
We do not repeat the proof as nothing has changed. There were two steps: \\
1) finite abelian is natural if and only if it $|G|$ is odd or $G=C_2^r$. \\
2) Infinite finitely generated abelian group $G$ are non-natural. \\

A nice example of a non-natural group is ${\rm Pin}^-(2) = \mathbb{T}^1 \times C_2
={\rm Pin}^-(2)={\rm Dic}(S^1,-1)$, The other case ${\rm Pin}^+(2)=O(2) = S^1 \rtimes C_2$
was generated by involutions and so was natural.

\paragraph{}
We have seen in \cite{GraphsGroupsGeometry} that the additive group of the 
$p$-adic field $\mathbb{Q}_p$ was natural: it was as an abstract group 
isomorphic to the group $\mathbb{R}$. A similar isomorphism argument can be shown for any 
group that is part of a vector space over $\mathbb{Q}$ for the which the 
cardinality is not larger than the continuum. 
For example, if $M$ is compact metric then $C(M,\mathbb{R}),+)$ is natural. As a 
torsion-free divisible abelian group, it is a $\mathbb{Q}$ vector space with 
${\rm dim}_{\mathbb{Q}} C(M,R) \geq {\rm dim}_{\mathbb{Q}}(R)=c$. Any nonzero or 
complex real vector space of cardinality $c$ is additively isomorphic to $\mathbb{R}$. 

\paragraph{}
We can use an {\bf affine rigidity} theorem of Jarosz \cite{Jarosz1988} and
combine it with Mazur-Ulam and $2$-divisibility to get:

\begin{thm}
Every real or complex vector space $(V,+)$ is natural.
\end{thm}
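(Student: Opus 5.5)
The plan is to choose a normed-space metric and show that its isometry group has so little room that any regular subgroup is forced to be the translation group. The trivial space $V=\{0\}$ is natural trivially, so assume $V\neq 0$. A complex space is in particular a real space with the same additive group, so it suffices to treat real vector spaces $V$ of arbitrary (Hamel) dimension. Jarosz's affine rigidity theorem \cite{Jarosz1988} says that every real vector space carries a complete norm $\|\cdot\|$ whose only surjective linear isometries are $\pm \mathrm{Id}$. I will equip $V$ with the metric $d(x,y)=\|x-y\|$. Right translations $R_a(x)=x+a$ are isometries because $d$ is translation invariant.

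\textbf{Step 1 (compute the isometry group).} By Mazur--Ulam, every surjective isometry $T$ of the real normed space $(V,\|\cdot\|)$ is affine, $T(x)=L(x)+b$, with $L$ a surjective linear isometry. By the choice of the norm, $L=\pm\mathrm{Id}$. Hence
\[
{\rm Iso}(V,d)=\{x\mapsto x+b\}\cup\{x\mapsto -x+b\}\cong {\rm Dih}(V)=V\rtimes_{-1}C_2 .
\]

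\textbf{Step 2 (regular subgroups of ${\rm Dih}(V)$).} Suppose $(V,\circ)$ is a group structure all of whose right translations are isometries. Then $H=R^\circ(V)$ is a subgroup of ${\rm Iso}(V,d)$ acting regularly on $V$. Let $H_0=H\cap \{\text{translations}\}$, a subgroup of index at most $2$ in $H$, and identify $H_0$ with a subgroup $W\le V$ of translation vectors.
\begin{itemize}
\item If $H_0=H$, then regularity forces $W=V$, so $(V,\circ)\cong (V,+)$.
\item Otherwise $H$ contains some $s(x)=-x+b$, and $H=W\cup sW$. Regularity means the orbit of $0$ is all of $V$ with trivial stabilizers. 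The orbit of $0$ is $W\cup(b-W)=W\cup(b+W)$, so $V=W\cup(b+W)$, and $W$ has index at most $2$ in $V$.
\end{itemize}

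\textbf{Step 3 (use $2$-divisibility).} This is where $2V=V$ enters. A subgroup $W$ of index $2$ would give a surjection $V\to C_2$, which is impossible since $V$ is divisible by $2$. Hence $W=V$. But then $H$ contains all translations together with $s$, so $|H|>|V|$ in the sense that the stabilizer is nontrivial: the element $x\mapsto -x$ (composed with a suitable translation) fixes $0$. This contradicts freeness. So only the first case occurs, and every compatible group structure is isomorphic to $(V,+)$.

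The main obstacle is Step 1, specifically the existence of a norm with trivial linear isometry group $\{\pm\mathrm{Id}\}$ in \emph{every} dimension, including uncountable ones where no cardinality bound holds. This is exactly the content of Jarosz's theorem, and I would check that his result covers arbitrary real vector spaces, not merely infinite-dimensional Banach spaces. If it does not cover finite dimensions, I would handle $\mathbb{R}^n$ separately via the earlier result that connected Lie groups are natural. A secondary point is that Mazur--Ulam requires a real normed space, which is why the complex case is first reduced to its underlying real space.
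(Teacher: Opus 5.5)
Your overall route is the paper's: a Jarosz norm, then Mazur--Ulam to get ${\rm Iso}(V,d)=\{x\mapsto \pm x+b : b\in V\}$, then $2$-divisibility to keep the point reflections out of any regular subgroup. Step 3 is correct, though the index-two detour is unnecessary: $x\mapsto b-x$ fixes $b/2$, which contradicts freeness directly. You should also drop the phrase $|H|>|V|$, which is false for infinite $V$.

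The genuine gap is in Step 1, because the statement you attribute to Jarosz is false: not every real vector space carries a complete norm. By the Baire category theorem, an infinite-dimensional Banach space has uncountable Hamel dimension. So $\mathbb{R}[t]$, or any space of countably infinite dimension, admits no complete norm at all. Jarosz's theorem is about Banach spaces: every Banach space has an \emph{equivalent} norm whose only surjective linear isometries are trivial. For such $V$, your Step 1 therefore has no norm to work with. Your fallback also targets the wrong case. Finite-dimensional spaces are complete under every norm; the problematic spaces are certain infinite-dimensional ones.

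What is missing is the reduction the paper makes:
\begin{enumerate}
\item Put any norm on $V$, for instance the $\ell^1$-norm with respect to a Hamel basis, and let $X$ be the completion.
\item Give $X$ Jarosz's equivalent norm $\|\cdot\|$ and restrict it to $V$. Because the norms are equivalent, $V$ is still dense in $(X,\|\cdot\|)$.
\item A surjective linear isometry $L$ of $(V,\|\cdot\|)$ extends by uniform continuity to a linear isometry $\bar L$ of $X$. Its range is complete, hence closed, and contains $L(V)=V$, so $\bar L$ is onto.
\item Hence $\bar L=\pm\mathrm{Id}$, and therefore $L=\pm\mathrm{Id}$.
\end{enumerate}
Mazur--Ulam holds for arbitrary real normed spaces, so completeness of $V$ is never needed. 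With this norm, the rest of your argument goes through unchanged.
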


\begin{proof}
The {\bf Jarosz rigidity theorem} tells that 
every real Banach space admits an equivalent norm whose
surjective linear isometries are $\pm 1$ \cite{Jarosz1988}.
Put any norm on $V$, complete it to a Banach space $X$, 
apply Jarosz \cite{Jarosz1988}, to $X$ and restrict the resulting 
norm back to $V$. This produces a metric. 
By the {\bf Mazur–Ulam theorem}, every surjective isometry of $V$ is affine:
${\bf Iso}(V,d)=\{x \to a+x,x \to a-x: a \in V \}$.
Let $H$ is any regular subgroup of this isometry group. 
It cannot contain an isometry $x \to a-x$
because this has the fixed point $x=a/2$. This is $2$-divisibility.
The group $H$ consists entirely of translations. If it is transitive, 
it must contain every translation, so $H=R(V,+)$. 
Hence the additive group is natural.
\end{proof}

\paragraph{}
A consequence is a result on abelian Lie groups that does not require any cardinality assumption.

\begin{coro}
\label{Banachliegroup}
Every connected abelian Banach-Lie group is natural. 
\end{coro}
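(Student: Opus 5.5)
The plan is to reduce to the vector space case by passing to the universal covering. Let $G$ be a connected abelian Banach–Lie group with Lie algebra $E$, which is a real Banach space. Because $G$ is abelian, $\exp:E\to G$ is a continuous homomorphism. It is a local diffeomorphism at $0$, so its image is an open subgroup, and connectedness makes it all of $G$. Its kernel $\Gamma$ is a discrete subgroup of $E$. Hence $G\cong E/\Gamma$, both as a group and as a covering space $\pi:E\to G$. If $\Gamma=0$ we are done by the previous theorem, so the real work is a nontrivial $\Gamma$, as for $\mathbb{T}^n$.

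\textbf{Step 1: the metric.} Apply the Jarosz rigidity theorem to $E$ to obtain an equivalent norm $\|\cdot\|$ whose surjective linear isometries are exactly $\pm 1$. Define on $G$ the quotient metric
\[ d(\pi(u),\pi(v))=\inf_{\gamma\in\Gamma}\|u-v-\gamma\|. \]
This is a genuine metric because $\Gamma$ is discrete and closed. It is invariant under all translations of $G$, which is abelian, so right translations are isometries. It is also invariant under $x\mapsto -x$, because $\Gamma=-\Gamma$.

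\textbf{Step 2: determine ${\rm Iso}(G,d)$.} I claim ${\rm Iso}(G,d)=\{x\mapsto a+x,\ x\mapsto a-x : a\in G\}$. The metric $d$ is locally isometric, via $\pi$, to the norm metric on $E$: on balls of radius less than half the minimal length of $\Gamma\setminus\{0\}$, $\pi$ is an isometry. Moreover $d$ is a length metric, since the norm metric is geodesic along segments. Let $T$ be an isometry of $G$. After composing with a translation we may assume $T(0)=0$. Since $T$ is a homeomorphism and $\pi$ is a covering with $E$ simply connected, $T$ lifts to a homeomorphism $\tilde T:E\to E$ with $\tilde T(0)=0$. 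The lift $\tilde T$ is a local isometry of the norm metric. Because the norm metric is a length metric and $\tilde T$ maps rectifiable paths to paths of the same length (and likewise $\tilde T^{-1}$), $\tilde T$ is a global surjective isometry of $(E,\|\cdot\|)$. By Mazur–Ulam, $\tilde T$ is linear, so by the choice of norm $\tilde T=\pm 1$. Projecting down gives $T=\pm{\rm id}$.

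\textbf{Step 3: regular subgroups.} Let $(G,\circ)$ be another group structure on the same set whose right translations are isometries. Then $H=R^\circ(G)$ is a regular subgroup of ${\rm Iso}(G,d)$. An isometry $x\mapsto a-x$ has a fixed point whenever $a\in 2G$. But $G$ is $2$-divisible, because $E$ is and $\pi$ is a surjective homomorphism: writing $a=\pi(u)$, the point $x=\pi(u/2)$ is fixed. A nonidentity element of a regular group acts freely, so $H$ contains only translations. A transitive group of translations of $G$ is the full translation group, so $H=R(G,+)$ and $(G,\circ)\cong(G,+)$. This is exactly the final argument in the vector space theorem.

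I expect Step 2 to be the main obstacle. The delicate points are that an isometry of the quotient lifts to the covering, which needs $\pi$ to be a covering map with $\Gamma$ uniformly discrete in the Jarosz norm, and that a local isometry of a normed space which is a homeomorphism is a global isometry. To handle the second point I would first try the length-metric argument above. As a fallback, I would check directly that $\tilde T(u+\gamma)=\tilde T(u)+\tilde\gamma$ with $\tilde\gamma\in\Gamma$, and that $\tilde T$ is $1$-Lipschitz along straight segments, which gives $\|\tilde T u-\tilde T v\|\le\|u-v\|$. Applying the same to $\tilde T^{-1}$ gives equality.
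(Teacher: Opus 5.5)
Your proof is correct and follows the paper's route: $G\cong X/\Gamma$, the Jarosz norm on $X$, the quotient metric, lifting isometries through the covering, Mazur--Ulam, and $2$-divisibility to rule out the maps $x\mapsto a-x$. The only differences are that you determine ${\rm Iso}(G,d)=\{x\mapsto a\pm x\}$ on $G$ itself rather than lifting the regular subgroup to $X$, and that you justify the lifting step (local isometry plus path length gives a global isometry), which the paper only asserts. One harmless slip: $\pi$ is isometric on balls of radius less than a quarter, not a half, of $\delta=\inf\{\|\gamma\|:\gamma\in\Gamma\setminus\{0\}\}$ (the points $\pm 0.45$ in $\mathbb{R}/\mathbb{Z}$ show a half fails), but your argument only needs some positive radius.
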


\begin{proof}
A connected abelian Banach–Lie group $G$ is the quotient
$X/\Gamma$ of a real Banach space $X$ by a discrete subgroup $\Gamma$.
The Jarosz-Mazur-Ulam construction in the proof of the previous theorem
gave the norm and so the metric on $X$. 
This defines a metric on $G$. Every isometry on $G$ 
lifts to an isometry on $X$, showing that any regular subgroup
of $G$ lifts to a regular isometry group of $G$ and so must be
the set of translations.
\end{proof} 

\paragraph{}
We have seen in \cite{GraphsGroupsGeometry},
that the additive group of the $p$-adic field $\mathbb{Q}_p$ was 
natural for all $p$. The reason was that it is group isomorphic to $\mathbb{R}$. 
A similar argument would show for example that if $M$ is compact metric space, 
then $C(M,\mathbb{R}),+)$ was natural. Actually, any 
finite-dimensional or countably dimensional $\mathbb{Q_p}$-vector space is natural.

\paragraph{}
For $K=\mathbb{R}$ or $\mathbb{Q}_p$, any same-cardinality extension 
immediately inherits naturality of its additive group. In particular,
$\mathbb{Q}_p(t), \mathbb{Q}_p(t_1,\ldots,t_n)$ are also additively isomorphic to $\mathbb{R}$, hence natural.
The interesting cases therefore begin when the base field is countable, such as
$\mathbb{Q}, \overline{\mathbb{Q}} i$ or $\mathbb{F}_p(t)$.
If $F$ is a field, we can look at $F(t)$, the {\bf field of rational functions}. 
In general, one can show that if $F$ is a field with $|F| \leq c$ and $(F,+)$ 
is natural, then $(F(t),+)$ is natural. Such considerations allow for 
a catchy conjecture: 

\begin{conj}
If $F$ is any field, then the additive group of $F$ is natural.
\end{conj}

\paragraph{}
It might be equivalent to the question whether $C_p^{k})$ is natural
for cardinalities larger than $c$ also. 
We know that the additive group of every division ring for characteristic 
$0$ and $2$ are natural. The characteristic $p$ case could
reduces to the previously unresolved question whether $C_p^{(k)}$ natural for 
every odd prime $p$ and every cardinal $k$.
In the case of finite fields $F_q$ of characteristic $p$, we again see
a rift between $p=2$ and odd primes $p$. 

\footnote{
The direct sum $\bigoplus_{i\in I} A_i = \{ (a_i)_{i \in I} \}$ has
$a_i=1$ for all but finitely many $i$, unlike 
the direct product $\prod_{i \in I} A_i = \{(a_i)_{i \in I} | a_i \in A_i\}$.
Cardinalities differ $|\bigoplus_{\mathbb{N}} C_2|=\aleph_0$, 
$|\prod_{\mathbb{N}} C_2 |=2^{\aleph_0}$. }

\begin{thm}
$V=C_p^{(k)}$ is natural for every prime $p$ and every $k \leq c$. 
\end{thm}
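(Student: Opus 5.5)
Split by the prime. For $p=2$, every nonzero element of $V=C_2^{(k)}$ is an involution, so $V$ is a reflection group and is natural by the reflection group theorem (this case needs no bound on $k$). From now on $p$ is odd and $k\le c$, so $|V|\le c$.

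For odd $p$ and finite $k$, $V$ is a finite abelian group of odd order, so it is natural by the abelian structure theorem. For infinite $k\le c$ the plan is to mimic the Jarosz--Mazur--Ulam argument in the finite field setting. The goal is a metric on $V$, invariant under translations, whose isometry group is exactly the group of affine maps $x\mapsto \epsilon x+a$ with $\epsilon=\pm1$.

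\textbf{Constructing the metric.} View $V$ as a vector space over $\mathbb{F}_p$ with a basis $\{e_i\}_{i<k}$. Its ``colors'' $[v]=\{v,-v\}$ number at most $c$. As in step (i) of the structure theorem, assign each color a distinct weight $\lambda_{[v]}\in(1,2)$ and set $d(x,y)=\lambda_{[y-x]}$. Translations are isometries, and so is $x\mapsto -x$. Any isometry $T$ fixing $0$ preserves every color, so $T(x)\in\{x,-x\}$ for all $x$.

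\textbf{The key step.} We must show such a color-preserving $T$ is globally $\pm\mathrm{id}$. This is the Leemann--de la Salle orientation rigidity statement for abelian groups. In that case the color-preserving automorphisms fixing $1$ are exactly the maps $g\mapsto g^{\pm1}$, and orientation rigidity fails only because inversion survives.

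To get there, take $x,y$ with $y\ne\pm x$. Then $T(x)-T(y)=\pm(x-y)$ and $T(x)=\epsilon_x x$, $T(y)=\epsilon_y y$. If $\epsilon_x\ne\epsilon_y$, we would get $x+y=\pm(x-y)$, which forces $2y=0$ or $2x=0$. Since $p$ is odd, this means $x=0$ or $y=0$. Hence $\epsilon$ is constant on nonzero elements, except possibly between $x$ and $-x$, where the sign is vacuous anyway. So $T=\pm\mathrm{id}$, and therefore ${\rm Iso}(V,d)=\{x\mapsto \pm x+a\}$.

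\textbf{Regular subgroups.} Let $H$ be a regular subgroup of this isometry group. A map $x\mapsto a-x$ fixes $x=a/2$, which exists because $2$ is invertible mod $p$. Freeness of $H$ therefore excludes every such map, so $H$ consists of translations only. Transitivity then forces $H$ to contain all translations, so $H=R(V)$. Exactly as in step (vi) of the structure theorem and in the vector space theorem, any group structure $(V,\circ)$ whose right translations are isometries is isomorphic to $V$.

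\textbf{Main obstacle.} The delicate point is the rigidity step, which must show that a color-preserving isometry fixing $0$ is $\pm\mathrm{id}$. The sign comparison above handles it directly here, and it uses only that $V$ has no elements of order $2$. The cardinality bound $k\le c$ enters only through the injective choice of weights in $(1,2)$. That is precisely why $k>c$ for odd $p$ remains open, while the $p=2$ case is free of this restriction.
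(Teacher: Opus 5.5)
Your proof is correct and follows the paper's route. You use the same case split by $p$ and by finite versus infinite $k$, the same colored metric $d(x,y)=\lambda_{[y-x]}$, the same identification ${\rm Iso}(V,d)=\{x\mapsto \pm x+a\}$, and the same exclusion of the maps $x\mapsto a-x$ from any regular subgroup via the fixed point $a/2$. The one difference is that the paper cites the Leemann--de la Salle description of color-preserving automorphisms, while you verify it directly by the sign comparison, which makes your argument self-contained. That comparison needs no restriction $y\neq\pm x$, since $\epsilon_x\neq\epsilon_y$ already forces $2x=0$ or $2y=0$; so the ``vacuous'' caveat about $x$ and $-x$ is unnecessary.
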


\begin{proof}
For $p=2$, the group $V$ is generated by reflections and part of the general 
result on reflections. For odd $p$ and finite $k$, it is part of the 
structure theorem of finite natural groups. For odd $p$ and $k \leq c$, we can
for every oriented pairs $\{v,-v\}$, $v\neq 0$ use distinct
$\lambda_{\{v,-v\}}\in(1,2)$ as $k \leq c$, and define
the metric $d(x,x)=0$ and $d(x,y) - \lambda_{y-x,x-y}$ if $x \neq y$. 
By the description of the color-preserving automorphisms of the complete 
Cayley graph, ${\rm Iso}(V,d) =R(V) \rtimes \langle i \rangle$
with $i(x)=-x$, every element outside $R(V)$ has the form
$x \to a-x$ with the unique fixed point $a/2$. Such an element therefore
cannot belong to a regular subgroup of ${\rm Iso}(V,d)$.
Consequently every regular subgroup is contained in $R(V)$, and
transitivity forces it to be $R(V)$. Also $V$ is natural.
\end{proof}

\paragraph{}
Leemann and de la Salle in \cite{LeemannDeLaSalle2021} cite the 
work of Babai \cite{Babai1978}. This work allows to remove the cardinality 
condition in the case $p=2$. The theorem of Babai \cite{Babai1978} tells
that every infinite group $G$ has a {\bf directed graph} whose full automorphism 
group is precisely the regular representation of $G$ showing so
that graphs of arbitrarily large cardinality can have 
extremely rigid automorphism groups. This allows to define a natural
metric on any elementary abelian 2 groups:

\begin{thm} 
The Boolean group $G=C_2^{(k)}$ is natural for every cardinal $k$.
\end{thm}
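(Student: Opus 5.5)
The plan is to turn Babai's rigid digraph into a metric and then use that, in a Boolean group, every nonidentity element is an involution. The quickest route uses the Reflection group theorem of \cite{Sutherland2026}. $G=C_2^{(k)}$ is generated by its coordinate involutions $e_i$, $i<k$; indeed every nonzero element is an involution. So $G$ is a reflection group and natural for every cardinal $k$. Since the paper wants the Babai route, which works without any cardinality constraint, I would also give a self-contained argument along those lines.

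\textbf{Step 1: a rigid digraph.} For finite $k$ the case $k\leq c$ of the previous theorem suffices, so assume $G$ is infinite. By Babai \cite{Babai1978} there is a directed graph $\Gamma$ whose full automorphism group is exactly the regular representation of $G$. I would arrange it as a Cayley-type digraph on the vertex set $G$ itself, or on $G$ together with gadget vertices, with edges $x\to x+s$ for $s$ in a suitable set $S$. In $C_2^{(k)}$ we have $s=-s$, so the edges $x\to x+s$ and $x+s\to x$ come in pairs. I would therefore expect the orientation to be encoded by gadgets, namely attached finite rigid subgraphs of distinct isomorphism types, rather than by arrow direction.

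\textbf{Step 2: from digraph to metric.} Replace each directed edge by an undirected path gadget whose length pattern encodes the direction, for example an asymmetric subdivided path with a pendant vertex near the tail. Give every edge weight $1$ and use the geodesic path metric, with distance $\infty$ replaced by $2$, say, if $\Gamma$ is disconnected. Then restrict the metric to the vertex set $G$. Alternatively, set $d(x,y)=1$ if $\{x,y\}$ is an edge and $d(x,y)=2$ otherwise, for distinct $x,y$. This satisfies the triangle inequality automatically, as in the Structure theorem. Note that $x\to x+s$ is an edge iff $x+s\to x$ is, so this simple choice cannot see orientation. I would therefore use an undirected form of Babai's theorem, namely a graph with automorphism group $R(G)$, which Babai also provides for infinite groups. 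With $d$ taking values $0,1,2$, the isometries of $(G,d)$ are exactly the graph automorphisms. Right translations $x\mapsto x+a$ preserve adjacency, hence are isometries. This gives ${\rm Iso}(G,d)=R(G)$.

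\textbf{Step 3: conclude.} Let $(G,\circ)$ be any other group structure whose right translations are isometries. Then $R^\circ(G)$ is a transitive subgroup of the regular group $R(G)$. As in step (vi) of the Structure theorem, it equals $R(G)$, so $(G,\circ)\cong G$. The main obstacle is Step 2: making sure that Babai's construction can be taken undirected and on vertex set exactly $G$, with no auxiliary vertices. If auxiliary vertices are needed, I would instead distinguish them metrically. I would give gadget edges weights in $(1,2)$ different from the weight of the $G$-edges, and then use the induced metric on $G$, checking that every isometry of the restricted metric extends to an automorphism of the whole graph. If this extension fails, I would fall back on the reflection group argument above.
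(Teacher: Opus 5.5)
Your proposal is correct, but your main argument differs from the paper's. You get the theorem in one line from the reflection group theorem of \cite{Sutherland2026}: every nonzero element of $C_2^{(k)}$ is an involution, so $G$ is a reflection group for every cardinal $k$. This is legitimate within the paper, which uses the same step for $p=2$, $k\le c$ in the preceding theorem, and it makes the present theorem a corollary of Theorem 1. The cost is that everything rests on Sutherland's argument, which the paper only sketches for $|G|\le c$, and you get no explicit metric.

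The paper instead argues directly from \cite{Babai1978}, which is essentially your secondary sketch. That route is independent of Sutherland and fully explicit: a metric with only the values $1,2$ off the diagonal and ${\rm Iso}(G,d)=R(G)$ exactly.

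The ``main obstacle'' you flag in that sketch does not exist.
\begin{itemize}
\item A digraphical regular representation has an automorphism group acting regularly on the vertices. So the vertex set is a single orbit, identified with $G$, and the digraph is a Cayley digraph $\Gamma(G,S)$ with no auxiliary or gadget vertices.
\item Since $s=s^{-1}$ in a Boolean group, every arc $x\to x+s$ is accompanied by $x+s\to x$. The digraph is therefore symmetric, and its automorphism group equals that of the underlying undirected graph.
\end{itemize}
Your remark that the $0,1,2$ metric ``cannot see orientation'' is thus a misreading: there is no orientation information to lose or to encode. Nor do you need a separate undirected version of Babai's theorem, because the directed statement already yields the graphical regular representation. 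With this observation, your Steps 2 and 3 (the $0,1,2$ metric and the transitive-subgroup-of-a-regular-group argument) are exactly the paper's proof.
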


\begin{proof}
For finite cardinalities $k$, the situation is settled. In the infinite case,
Babai's theorem produces a directed Cayley graph $(G,E)=\Gamma(G,S)$ 
for some $S \subset G \setminus \{0\}$ for which ${\rm Aut}( (G,E) )=R(G)$.
Since in a Boolean group, every element is its own inverse, the 
Cayley graph is undirected and so a graphical regular representation. 
Now define from $(V,E)$ a metric taking only two nonzero distances:
Let $d(x,x)=0$, $d(x,y)=1$ if $(x,y) \in E$, and $2$ if $(x,y) \notin E$. 
This metric has exactly the original translations as isometries. 
As ${\rm Iso}(G,d) = R(G)$, the group $G$ is natural.
\end{proof}

\paragraph{}
For odd $p$, the preceding colored-metric argument stops at
$c$, there would be more Cayley colors to cover with. 

\begin{conj}
Also for odd $p$, $C_p^{(k)}$ is natural for $k>c$.
\end{conj}

\paragraph{}
For odd $p$ there are no nontrivial involutions. In this abelian case,
neither the reflection mechanism, nor Jarosz-Mazur-Ulam applies, nor Babai applies.
Leemann and de la Salle prove that every non-Boolean abelian group is
$G \rtimes \{1,i\}$ with $i(g)=g^{-1}$ with no cardinality restriction.

\paragraph{}
Adding a $C_2$-factor supplied the index-two co-set to construct a
competing affine regular action.

\begin{thm}
For nonzero cardinals $k,r$ and odd $p$, the group $G=C_2^r \oplus C_p^k$ is non-natural.
\end{thm}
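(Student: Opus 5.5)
The plan is to follow the pattern of the $(\mathbb{Z},+)$ example and of the ``$\Rightarrow$'' direction of the structure theorem. Since $G$ is abelian, left and right translations coincide. So to show $G$ is non-natural, I will take an \emph{arbitrary} metric $d$ on $G$ for which all translations $T_a(x)=x+a$ are isometries. I will then exhibit a regular group of isometries of $(G,d)$ that is non-abelian, and therefore not isomorphic to $G$. Nothing about $d$ beyond translation invariance and symmetry will be used.

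\textbf{Step 1: inversion is an isometry.} I check that $i(x)=-x$ is an isometry, exactly as for $\mathbb{Z}$. Translating by $x+y$ gives $d(-x,-y)=d(y,x)=d(x,y)$. Hence every affine map $x\mapsto c-x$ is an isometry, and so is every $T_a$.

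\textbf{Step 2: the competitor.} Write $G=E\oplus A$ with $E=C_2^r$ and $A=C_p^k$, both nontrivial. Fix $e\in E$ with $e\neq 0$ and a complement, $E=\langle e\rangle\oplus E'$. Put $K=E'\oplus A$, a subgroup of index $2$ not containing $e$. Define
$$H=\{T_b : b\in K\}\cup\{\sigma_b : b\in K\},\qquad \sigma_b(x)=e+b-x .$$
Closure is a short computation. We have $\sigma_b\sigma_{b'}=T_{b-b'}$, and products of a $\sigma$ with a translation are again of the form $\sigma_{b''}$, because $e+K$ is a coset of $K$. So $H$ is a group of isometries with $|H|=2|K|=|G|$. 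This holds for infinite cardinals too, but I will not need it, since regularity is checked directly.

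\textbf{Step 3: regularity, which is the key point.} Transitivity holds because the orbit of $0$ is $K\cup(e+K)=G$. Freeness is the step where the hypothesis ``$C_2$-factor plus odd part'' is used, in the spirit of the $2$-divisibility argument in the vector space theorem. A nontrivial translation has no fixed point. The map $\sigma_b$ fixes $x$ iff $2x=e+b$. But $2G=2A=A$, since $E$ is killed by $2$ and $2$ is invertible on $A$. Meanwhile $e+b$ has nonzero $E$-component, so it does not lie in $A$. Hence $\sigma_b$ is fixed-point free, and $H$ acts regularly on $G$.

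\textbf{Step 4: non-isomorphism.} We have $\sigma_0 T_a\sigma_0=T_{-a}$, so $H\cong{\rm Dih}(K)$. Choosing $a\in A$ nonzero gives $-a\neq a$ because $p$ is odd, so $H$ is non-abelian and cannot be isomorphic to the abelian group $G$. Since $d$ was arbitrary, no metric singles out the group structure of $G$, and $G$ is non-natural.

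The main obstacle is only the freeness check in Step 3: we must choose the flips so that none has a fixed point. This is precisely why the construction requires a factor $C_2$. It is also what fails for $C_p^k$ alone, where $2G=G$ and every flip $x\mapsto c-x$ has the fixed point $c/2$.
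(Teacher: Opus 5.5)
Your proof is correct and follows essentially the same route as the paper. The paper also uses that inversion is an isometry for any translation-invariant metric, and takes an index-two subgroup containing $C_p^k$: it is the kernel of a nonzero homomorphism $C_2^r\to C_2$, which is your $K=E'\oplus A$. It then builds the competitor from translations by that subgroup together with the flips $x\mapsto b+a-x$, gets freeness from $2G\subseteq C_p^k$, and excludes isomorphism because the competitor is non-abelian.
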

\begin{proof}
Let $d$ be any metric for which all translations of $G$ are isometries. Since $G$ is abelian, the
inversion $\sigma(x)=-x$ is an isometry: translation invariance and symmetry give
$d(-x,-y)=d(0,x-y)=d(0,y-x)=d(x,y)$. Choose a nonzero homomorphism $\phi: C_2^k \to C_2$
and extend it trivially over the $C_p^k$-factor of $G$. Define $A={\rm ker}(\phi)$.
Then $[G:A]=2$ and $C_p^k \subset A$ and choose $b\in G \setminus A$.
The affine isometries $H= \{x \mapsto x+a, a \in A\} \cup \{x \mapsto b+a-x: a \in A\}$
form a subgroup of ${\rm Iso}(G,d)$. It is transitive because the orbit of $0$ is $A\cup(b+A)=G$.
It is also free. A translation by $a\neq 0$ has no fixed point. Proof: if $x\mapsto b+a-x$
had a fixed point, then $2x=b+a$ But $2G \subseteq C_p^k \subset A$
while $b+a\notin A$. Contradiction. We see that $H$ is a regular subgroup of the isometry group.
But $H \cong A \rtimes_{-1} C_2$ is non-abelian: choose $0 \neq u \in C_p^k \subset A$.
Since p is odd, $-u\neq u$, so conjugation by an element in the other co-set sends $u$ to $-u$. Hence
$H \not{\cong} G$, because G is abelian. Therefore every possible translation - invariant metric 
on $G$ admits a competing non-isomorphic regular group, proving non-naturality.
\end{proof} 

\paragraph{}
The same proof shows that $C_2^{(k)} \times A$ is non-natural, if $A$ is abelian but not 
elementary abelian of exponent $2$. Among abelian groups, the presence of a direct 
$C_2$-factor is a disaster for naturality: unless the entire group has exponent $2$, 
it forces non-naturality.
 Examples are $C_2^k \times \mathbb{Z}$,
$C_2^k \times \mathbb{Q}$,
$C_2^k \times \mathbb{R}$,
$C_2^k \times C_{p}^{\lambda}$
for odd $p$ and nonzero $\lambda$, regardless of cardinality. 
This nicely generalizes the earlier $\mathbb{R} \times C_2$ example.
The abelian assumption of $A$ was essential $C_2 \times S_3$ is natural (it is 
generated by involutions) while $C_2 \times Q_8$ is non-natural by the
finite classification theorem. 

\begin{thm}
For $|G| \leq c$ and abelian $A$, the group $G=A \rtimes_\phi C_2^k$ 
is natural as long as $\phi \neq 1$.
\end{thm}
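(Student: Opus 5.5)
The plan is to apply the structure theorem for non-abelian groups, exactly as in the corollary on Kranz products. The cardinality hypothesis $|G|\leq c$ is part of the assumptions, so two things remain to be checked: that $G=A\rtimes_\phi C_2^k$ is non-abelian, and that it is not generalized dicyclic. The tool for the second point is the same involution criterion used in the wreath product corollary: \emph{every involution of a generalized dicyclic group is central}.

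First I would record this criterion carefully. Let $G'={\rm Dic}(A',y)$, so every element is $a$ or $xa$ with $a\in A'$. An element $xa$ outside $A'$ satisfies $(xa)^2=x a x a = x^2 (x^{-1}ax) a = y a^{-1} a = y \neq 1$. Hence $xa$ has order $4$, and all involutions of ${\rm Dic}(A',y)$ lie in $A'$. An involution $z\in A'$ commutes with $A'$ because $A'$ is abelian. It also satisfies $xzx^{-1}=z^{-1}=z$. Since $G'$ is generated by $A'$ and $x$, $z$ is central. This property is clearly invariant under isomorphism.

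Next I would use $\phi\neq 1$. It gives an element $b\in C_2^k$ and an element $a\in A$ with $\phi(b)(a)\neq a$. Then $b\neq 1$, and the element $(1,b)\in G$ satisfies $(1,b)^2=(1,b^2)=(1,1)$, so it is an involution. It is non-central, because
$$(1,b)(a,1)(1,b)^{-1}=(\phi(b)(a),1)\neq (a,1).$$
This one computation gives both facts at once. First, $G$ is non-abelian. Second, $G$ contains a non-central involution, so by the criterion above $G$ is not generalized dicyclic. The structure theorem then yields that $G$ is natural.

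I expect no step to be a real obstacle. The only point needing care is the claim that elements of the nontrivial coset $xA'$ all square to $y$. This uses only $xax^{-1}=a^{-1}$, so it holds uniformly for arbitrary abelian $A'$ and any cardinality. The semidirect product computation does not depend on whether $C_2^{k}$ is read as a direct sum or a direct product, since every nonidentity element of either is an involution.
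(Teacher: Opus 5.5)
Your proof is correct and follows the paper's argument. You take an element of $C_2^k$ acting nontrivially on $A$ to get a non-central involution $(1,b)$, use that every involution of a generalized dicyclic group is central because all elements of the coset $xA'$ square to $y\neq 1$, and conclude by the structure theorem; your version is only more explicit than the paper's, verifying $(xa)^2=y$ directly and obtaining non-abelianness from the same conjugation computation.
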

\begin{proof}
$G$ is non-abelian but contains a non-central involution. 
Choose $s \in C_2^k$ acting non-trivially on $A$. Then
$s^2=1$ and $sas^{-1}=\phi_s(a) \neq a$ for some $a\in A$.
But in a generalized dicyclic group, every involution is central. 
Indeed, if $D={\rm Dic}(B,y)$, all elements outside the abelian 
index-two subgroup $B$ have square $y \neq 1$, 
so every involution lies in $B$, and an involution $u \in B$ satisfies $u^{-1}=u$,
hence the inversion action fixes it. It is so central.
Consequently $A \rtimes_\phi C_2^k$ with nontrivial action cannot be generalized dicyclic.
The classification theorem shows then the result.
\end{proof}

\section{More examples}

\paragraph{}
The reflection result can be applied to the group ${\rm Diff}(M)$ of some 
compact finite dimensional manifolds or to the identity component ${\rm Diff}(M_0)$. 
These groups have the cardinality of the continuum. Since they are simple groups,
naturality follows by the {\bf existence of a single involution}. 
The structure theorem bypasses the mapping class group difficulty:

\begin{coro}
For any manifold $M$, the group ${\rm {\rm Diff}}(M)$ is natural.
\end{coro}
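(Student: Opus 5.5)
The plan is to apply the corollary that every non-metabelian group of cardinality $\leq c$ is natural, which rests on the structure theorem. Two things have to be checked for $G={\rm Diff}(M)$: the cardinality bound $|G|\leq c$, and $G''\neq 1$. The statement is only meaningful for $\dim M\geq 1$; for a zero-dimensional manifold, ${\rm Diff}(M)$ is a permutation group of a countable set, and that case should be excluded or handled separately. I would restrict to second countable $M$ of positive dimension, as the paper announced earlier.

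\textbf{Cardinality.} Since $M$ is second countable, it has a countable dense subset $D$, and a continuous map $f:M\to M$ is determined by its restriction to $D$. Hence $|{\rm Diff}(M)|\leq |M|^{|D|}\leq c^{\aleph_0}=c$. This uses only that $|M|\leq c$, which again follows from second countability.

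\textbf{Non-metabelian.} I would argue locally. Pick a coordinate chart $U\cong\mathbb{R}^n$ with $n\geq 1$. Every diffeomorphism of $\mathbb{R}^n$ with compact support inside the chart extends by the identity to an element of ${\rm Diff}(M)$. This gives a subgroup $H\cong {\rm Diff}_c(\mathbb{R}^n)$ of ${\rm Diff}(M)$, and it suffices to show $H''\neq 1$. Any non-solvable subgroup would do. The cleanest choice is to embed a non-abelian free group, or better a finite non-solvable group such as $A_5$, into compactly supported diffeomorphisms. For $n\geq 2$, a finite non-solvable group can act on a small disk by rotations: for $n\geq 3$ take $A_5\subset SO(3)$ acting radially, with a bump function in the radial variable to make the support compact. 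For $n=1$, I would instead use that ${\rm Diff}_c(\mathbb{R})$ contains two elements generating a free group, for example by a ping-pong argument with bumps. Alternatively, one can cite Thurston/Mather: ${\rm Diff}_c(\mathbb{R}^n)_0$ is perfect and non-trivial, so $H_0''=H_0\neq 1$, which handles every dimension uniformly. Once $H''\neq 1$, we get $G''\supseteq H''\neq 1$, so $G$ is non-metabelian and the corollary applies.

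\textbf{Main obstacle.} The hard step is establishing $H''\neq 1$ with a self-contained argument in low dimensions, especially $n=1$ and $n=2$, where $SO(n)$ is abelian. I would first try the perfectness theorem of Thurston and Mather, since it handles every dimension at once. As a fallback I would use an explicit elementary construction: four bump diffeomorphisms $a,b,c,d$ with nested or overlapping supports on a line, arranged so that $[[a,b],[c,d]]\neq 1$. This can be verified by following the image of a single point.
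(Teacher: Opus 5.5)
Your proposal is correct and follows the paper's route. You bound the cardinality, show non-metabelian via Thurston--Mather perfectness, and invoke the non-metabelian corollary. Working inside a chart with ${\rm Diff}_c(\mathbb{R}^n)_0$ is in fact more careful than the paper's claim $G'={\rm Diff}_0(M)$. That claim fails in general: for $M=T^2$ the mapping class group $GL(2,\mathbb{Z})$ is non-abelian, so $G'\not\subseteq {\rm Diff}_0(M)$. Only $G''\supseteq {\rm Diff}_c(M)_0\neq 1$ is needed, which is what you use.

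Drop the $A_5$ suggestion, though. A finite-order diffeomorphism of a connected manifold is an isometry of an averaged Riemannian metric, so if it is the identity on a nonempty open set it is the identity everywhere. Hence compactly supported diffeomorphisms of $\mathbb{R}^n$ contain no nontrivial finite subgroups. Relatedly, the radial interpolation $x\mapsto\gamma_g(|x|)x$ is not a homomorphism in $g$.
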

\begin{proof}
We know that $|G| \leq c$ and that it is not metabelian. 
$G={\rm Diff}(M)$ is not metabelian because $G'={\rm Diff}_0(M)$, 
which is perfect so that $G''=G'$. 
\end{proof}

\paragraph{}
The situation for homeomorphism groups is similar. 

\begin{coro}
For any manifold $M$, the group ${\rm {\rm Homeo}}(M)$ is natural.
\end{coro}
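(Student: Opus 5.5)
The plan is to follow the proof of the preceding corollary for ${\rm Diff}(M)$ and reduce to the structure theorem, more precisely to the corollary that every non-metabelian group of cardinality $\leq c$ is natural. So two things must be checked for $G={\rm Homeo}(M)$: the cardinality bound $|G|\leq c$ and the failure of $G''=\{1\}$. I assume, as the paper does, that $M$ is second countable and of positive dimension (for $\dim M=0$ with countably many points the group is a symmetric group, which is a reflection group and so natural by the first theorem).

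\textbf{Cardinality.} Since $M$ is second countable it has a countable dense subset $D$. A homeomorphism is continuous, so it is determined by its restriction to $D$, which is a map $D\to M$. Hence $|G|\leq |M|^{|D|}\leq c^{\aleph_0}=c$, because a second countable manifold has cardinality at most $c$.

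\textbf{Non-metabelian.} Let $G_0={\rm Homeo}_0(M)$ be the identity component, or, in the noncompact case, the group of compactly supported homeomorphisms isotopic to the identity. I would invoke the classical perfectness results: by Anderson, Fisher and Edwards--Kirby, this group is perfect and in fact simple. If $G_0$ is perfect and nontrivial, then $G_0=G_0'\subset G'$, and therefore $G_0=G_0''\subset G''$. So $G''\neq\{1\}$, and the group is not metabelian.

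\textbf{Main obstacle.} The hard step is justifying perfectness of $G_0$ in full generality, including noncompact and nonorientable $M$. I would cite the fragmentation lemma of Edwards--Kirby, which writes any element near the identity as a product of homeomorphisms supported in balls. Combining it with Anderson's result that ${\rm Homeo}_c(\mathbb{R}^n)$ is perfect handles each piece. If only the weaker statement is available, namely that $G_0$ is non-solvable, this still suffices, since by the corollary on non-solvable groups non-solvability already implies naturality when $|G|\leq c$. With both conditions verified, the structure theorem gives that ${\rm Homeo}(M)$ is natural.
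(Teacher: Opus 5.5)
Your proposal is correct and follows the paper's route: the paper handles ${\rm Homeo}(M)$ by saying it is "similar" to the ${\rm Diff}(M)$ case, i.e.\ it checks $|G|\leq c$ and shows $G$ is not metabelian via perfectness of the identity component, then applies the non-metabelian corollary of the structure theorem. Your version is slightly more careful than the paper's: you use only the containment ${\rm Homeo}_0(M)=({\rm Homeo}_0(M))''\subseteq G''$, whereas the paper's ${\rm Diff}(M)$ argument asserts the equality $G'={\rm Diff}_0(M)$, which fails when the mapping class group is non-abelian; you also treat the noncompact case (via the compactly supported subgroup) and the zero-dimensional case explicitly.
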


For closed compact manifolds, the connected component of the 
group of homeomorphisms is perfect an can be written as 
products of local homeomorphisms. 
If $M$ is a compact topological manifold, then ${\rm Homeo}_0(M)$ is natural because
it is generated by involutions and because the group is simple. 
For ${\rm Homeo}(M)$ the structure of the {\bf mapping class group}in the smooth case
can differ from the topological case. 
While in dimensions $1,2,3$, the mapping class groups the groups are the same,
already in dimension 4, fancy Seiberg-Witten invariants matter.
In dimension 3, all finite groups can occur. 

\paragraph{}
Discrete subgroups of ${\rm PSL}(2, \mathbb{C})$ are called {\bf Fuchsian groups}. 
All non-natural Fuchsian groups are abelian like $C_2 \times C_3$. 
Some Fuchsian groups are finitely generated, other are not. This example was the reason 
to probe whether the finiteness assumption was really needed. 

\begin{coro}
Non-abelian Fuchsian groups are natural. 
\end{coro}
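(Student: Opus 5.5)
The plan is to apply the structure theorem for non-abelian groups. Two things need checking: the cardinality bound $|G|\leq c$, and that a non-abelian Fuchsian group $G$ is never generalized dicyclic.

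\textbf{Cardinality.} $G$ is a subgroup of ${\rm PSL}(2,\mathbb{C})$, which has cardinality $c$, so $|G|\leq c$. Discreteness even makes $G$ countable, since a discrete subgroup of a second countable Lie group is countable. No finite generation is needed here, which is exactly the point of the upgraded structure theorem.

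\textbf{Not generalized dicyclic.} I argue by contradiction. Suppose $G={\rm Dic}(A,y)$ with $A$ abelian of index $2$, $y\in A$ of order $2$, $x^2=y$, and $xax^{-1}=a^{-1}$ for all $a\in A$. I would use the structure of involutions in ${\rm PSL}(2,\mathbb{C})$.
\begin{itemize}
\item An involution is a rotation by $\pi$ about a geodesic axis $\ell$, and its centralizer is the stabilizer of $\ell$. That stabilizer is the group of loxodromics and elliptics with axis $\ell$, extended by the half-turns about geodesics perpendicular to $\ell$.
\item In a generalized dicyclic group, $y$ is central. Hence all of $G$ stabilizes the axis $\ell$ of $y$.
\item $A$ commutes with $y$ and is abelian. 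Its non-involutive elements lie in the connected part preserving $\ell$ with orientation, so they act as loxodromic or elliptic elements along $\ell$, with complex translation length $\tau(a)$.
\item The element $x$ inverts $A$. Then $x$ preserves $\ell$ and reverses its orientation, so $x$ acts on $\ell$ as a reflection, i.e.\ it is a half-turn about a geodesic perpendicular to $\ell$.
\item Such a half-turn satisfies $x^2=1$, contradicting $x^2=y\neq 1$.
\end{itemize}
One subtlety: if $x$ preserves the orientation of $\ell$, then conjugation by $x$ fixes $\tau(a)$. Inversion sends $\tau(a)$ to $-\tau(a)$, so $\tau(a)=0$ modulo the rotation ambiguity. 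This forces every $a\in A$ to satisfy $a=a^{-1}$, i.e.\ $A$ has exponent $2$. But then the inversion action of $x$ on $A$ is trivial, so $G$ would be abelian, contrary to the definition of generalized dicyclic.

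\textbf{Main obstacle.} The delicate step is this case analysis inside ${\rm PSL}(2,\mathbb{C})$: correctly describing the centralizer of an involution, and checking that every orientation-reversing element of the stabilizer of $\ell$ squares to $1$. I would first verify this by matrices. Normalize $\ell$ to the axis $0\infty$, so that $y=\pm\,{\rm diag}(i,-i)$. The stabilizer then consists of the diagonal matrices together with the antidiagonal matrices $\begin{pmatrix}0&b\\-b^{-1}&0\end{pmatrix}$, and every antidiagonal matrix squares to $-I$, which is $1$ in ${\rm PSL}$. So an $x$ that inverts a nontrivial abelian $A$ must be antidiagonal and hence an involution, giving the contradiction. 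Alternatively, if $x$ is diagonal, then $G$ is diagonal and hence abelian. So $G$ is not generalized dicyclic, and the structure theorem gives naturality.

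A fallback would be the algebraic observation that ${\rm PSL}(2,\mathbb{C})$ contains no copy of $Q_8$, since its only element of order $2$ lifting to order $4$ would have to be $-I$, which is trivial in ${\rm PSL}$. I do not expect to need this, because the generalized dicyclic structure is already excluded by the matrix computation above.
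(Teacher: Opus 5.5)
Your proof is correct, but the step excluding ${\rm Dic}(A,y)$ takes a different route from the paper.

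\textbf{The paper's route.} The paper uses discreteness. Abelian Fuchsian groups are cyclic, so an infinite $A$ would be $\mathbb{Z}$, which cannot contain the involution $y$. A finite $A$ makes $G$ finite, and finite Fuchsian groups are cyclic, hence abelian.

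\textbf{Your route.} You use only the structure of ${\rm PSL}(2,\mathbb{C})$. Since $y$ is central, $G$ lies in the centralizer of $y$, which is the diagonal group together with the antidiagonal coset. Every antidiagonal element squares to the identity, so $x$ must be diagonal. Then $x$ commutes with, and hence fixes, every diagonal $a\in A$, forcing $a^2=1$. Together with the antidiagonal elements being involutions, $A$ has exponent $2$, $x$ centralizes $A$, and $G$ is abelian.

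\textbf{What each approach buys.} Your argument never uses discreteness, and the cardinality bound is automatic since $|{\rm PSL}(2,\mathbb{C})|=c$. So you actually prove that no subgroup of ${\rm PSL}(2,\mathbb{C})$ is generalized dicyclic. Hence every non-abelian subgroup of ${\rm PSL}(2,\mathbb{C})$ is natural, Kleinian groups included. This also covers the paper's own slip of calling discrete subgroups of ${\rm PSL}(2,\mathbb{C})$ Fuchsian, and it avoids the paper's rather loose appeal to classification facts. The paper's route is shorter if one takes those facts about Fuchsian groups for granted.

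\textbf{Two small remarks.} First, the sentence ``if $x$ is diagonal, then $G$ is diagonal'' needs one more line. With $x={\rm diag}(\lambda,\lambda^{-1})$ and $\lambda^2=\pm i$, conjugating an antidiagonal $w$ multiplies its off-diagonal entries by $\lambda^{2}$ and $\lambda^{-2}$. So $xwx^{-1}\neq \pm w$, whereas $w\in A$ would require $xwx^{-1}=w^{-1}=w$. Your exponent-$2$ argument already closes this case anyway. Second, your $Q_8$ fallback would not suffice on its own, because a generalized dicyclic group need not contain $Q_8$. For example, ${\rm Dic}(\mathbb{Z}\times C_2,(0,1))\cong\mathbb{Z}\rtimes C_4$ has no elements of order $4$ in $A$, so it contains no $Q_8$.
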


\begin{proof}
A Fuchsian group $G$ is a discrete subgroup of ${\rm PSL}(2,\mathbb{R})$. 
Since ${\rm PSL}(2,\mathbb{R})$ is second countable, every discrete 
subgroup is countable and $|G| \leq c$ applies.
No non-abelian Fuchsian group can be generalized dicyclic $D(A,y)$. 
As $G$ is a discrete subgroup of ${\rm PSL}(2,R)$, this is not possible. 
Case i) if A is infinite $Z$ this is not possible.
Case ii) If A is finite the G is finite and must be cyclic. 
All abelian ones must be subroups of $SO(2),R$ or $R^+$. 
An infinitely generated Fuchsian group is a free product of cyclic groups.
\end{proof}

\paragraph{}
There are 17 wall paper groups. There is only one abelian cases. It is 
$p_1$ is $\mathbb{Z}^2$ which we know not to be natural. An other example is 
$p3$, the orientation preserving Euclidean triangle group 
$D(3,3,3) = \langle x^3=y^3=z^3=xyz=1 \rangle \cong Z^2 \rtimes C_3$, 
where $C_3$ acts on the triangular lattice by rotation. Its commutator group
is $Z^2$ and there are no involutions.
Wall paper groups contain the translation lattice $Z^2$ and so are not Fuchsian. 
All 17 are finitely generated hence some of them are metabelian.
We can here look at the different paths to naturality:

\begin{coro}
All non-abelian wall paper groups are natural.
\end{coro}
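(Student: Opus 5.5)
Wallpaper groups are countable, so the cardinality hypothesis $|G| \leq c$ of the structure theorem holds automatically. The plan is therefore to apply the structure theorem: for each of the 16 non-abelian wallpaper groups, show that it is not generalized dicyclic. Where a group is generated by involutions, the reflection group theorem can be used directly instead, which reproduces the ``different paths to naturality'' the paper emphasizes.

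The key invariant is the one already used for wreath products and for $A \rtimes_\phi C_2^k$. In a generalized dicyclic group ${\rm Dic}(A,y)$, every involution lies in $A$ and is therefore central. Moreover, the group contains at least one involution, namely $y=x^2$. Hence a non-abelian group that is not generalized dicyclic must fall into one of two cases.

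\textbf{Case 1: some involution is non-central.} A wallpaper group $G$ sits in an exact sequence $1 \to \mathbb{Z}^2 \to G \to P \to 1$, where the point group $P \subset O(2)$ is finite. Any point reflection or rotation by $\pi$ acts on the lattice $\mathbb{Z}^2$ by $-1$ or by a reflection, so it is not central. Likewise, any mirror reflection fails to commute with the translations transverse to its mirror. So every wallpaper group containing a reflection or a half-turn (as an actual group element, not merely in $P$) has a non-central involution. This covers $p2$, $pm$, $cm$, $pmm$, $cmm$, $pmg$, $p4$, $p4m$, $p4g$, $p6$, $p6m$, $p3m1$, $p31m$ and $pgg$ (whose half-turns are genuine elements). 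Many of these, such as $pmm$, $p2$, $p3m1$ and $p6m$, are even generated by reflections or half-turns, so the reflection group theorem applies to them as well.

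\textbf{Case 2: no involutions at all.} This is what remains: $p3$, and $pg$ (whose glide reflections square to translations, so that $pg$ is torsion-free). In both groups the elements of finite order are, respectively, rotations of order $3$ and nothing, so there is no element of order $2$. Since every generalized dicyclic group contains $y$, neither group can be generalized dicyclic.

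The main obstacle is the bookkeeping: checking, group by group, which involutions actually exist in $G$ rather than just in the point group. The delicate cases are $pg$ and $pgg$. A reflection in the point group $P$ may lift only to glide reflections, which have infinite order. So for $pg$ I must confirm torsion-freeness, and for $pgg$ I must confirm that its half-turns are genuine, non-central involutions. Once all 16 non-abelian groups are sorted into the two cases above, the structure theorem yields naturality for each of them.
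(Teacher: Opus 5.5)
Your proof is correct. Its skeleton is the paper's: countability, then the structure theorem once non-dicyclicity is established. The difference lies in how non-dicyclicity is justified.

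The paper's proof only asserts that none of the 16 groups is generalized dicyclic. Its support is non-metabelianity for the five groups with non-abelian point group ($p4m$, $p4g$, $p3m1$, $p31m$, $p6m$; the ``11'' in its text is the count of \emph{metabelian} rows in its table) and reflection generation for four groups. For the eleven metabelian groups, whose point group is abelian, it gives no argument. Your involution criterion is the one the paper itself uses for wreath products and for $A \rtimes_\phi C_2^k$, and it is exactly what fills that gap. Your delicate checks are also right: $pgg$, $p4$ and $p6$ contain genuine half-turns, $pg$ is torsion-free, and $p3$ has no involutions.

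You can drop the case-by-case bookkeeping altogether. A central element of a wallpaper group commutes with the rank-$2$ translation lattice, so its linear part fixes a spanning set and the element is a translation. Hence the center is torsion-free and contains no involution. By contrast, $y=x^2$ is always a central involution of ${\rm Dic}(A,y)$. This is essentially the idea behind the paper's space-group corollary: an abelian subgroup of finite index consists of translations, so it cannot contain $y$. It shows at once that no crystallographic group in any dimension is generalized dicyclic. Your two-case split, on the other hand, has the merit of exhibiting a concrete witness in each group: a non-central involution, or the absence of involutions.
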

\begin{proof}
It follows from the structure theorem as all 16 non-abelian wall paper group 
are not generalized dicyclic. 11 of them are non are metabelian and so natural.
Four of them are Euclidean reflection groups. 
\end{proof}

\paragraph{}
For example, $p3=D(3,3,3)$ is metabelian but contains no involutions at all.
The point group is the group obtained by factoring out translations. 

\begin{center}
\begin{tiny}
\begin{tabular}{|lccccc|} \hline
Group & Point group & Metabelian? & Reflection-generated? & Natural? & Reason \\ \hline
$p1$    & $1$   & yes & no  & no  & $\mathbb{Z}^2$ is abelian and non-natural \\

$p2$    & $C_2$ & yes & no  & yes & not generalized dicyclic \\
$pm$    & $C_2$ & yes & no  & yes & not generalized dicyclic \\
$pg$    & $C_2$ & yes & no  & yes & not generalized dicyclic \\
$cm$    & $C_2$ & yes & no  & yes & not generalized dicyclic \\
$pmm$   & $D_2$ & yes & yes & yes & reflections, not generalized dicyclic \\
$pmg$   & $D_2$ & yes & no  & yes & not generalized dicyclic \\
$pgg$   & $D_2$ & yes & no  & yes & not generalized dicyclic \\
$cmm$   & $D_2$ & yes & no  & yes & not generalized dicyclic \\
$p4$    & $C_4$ & yes & no  & yes & not generalized dicyclic \\
$p4m$   & $D_4$ & no  & yes & yes & reflections; non-metabelian \\
$p4g$   & $D_4$ & no  & no  & yes & non-metabelian \\
$p3$    & $C_3$ & yes & no  & yes & not generalized dicyclic \\
$p3m1$  & $D_3$ & no  & yes & yes & reflections; non-metabelian \\
$p31m$  & $D_3$ & no  & no  & yes & non-metabelian \\
$p6$    & $C_6$ & yes & no  & yes & not generalized dicyclic \\
$p6m$   & $D_6$ & no  & yes & yes & reflections, non-metabelian \\ \hline
\end{tabular}
\end{tiny}
\end{center}

\paragraph{} 
The group $pmm$ is the reflection group $D_\infty \times D_\infty$.
The finite weighted Cayley metric on $pmm$ does make it into a topological
group but left translation is not an isometry. 

\paragraph{}
As for space groups the situation is even easier. There is the abelian $p_1 \cong \mathbb{Z}^3$ but
all others are non-abelian and not generalized dicyclic. A nice example is 
the generalized dihedral ${\rm Dih}(\mathbb{Z}^3)=\mathbb{Z}^3 \rtimes_{-1} C_2$. 

\begin{coro}
All of the 229 non-abelian space groups are natural.
\end{coro}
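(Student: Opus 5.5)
We will reduce the corollary to the Structure theorem. Two hypotheses must be checked for each of the $230$ crystallographic space groups in dimension $3$ other than the abelian $p1\cong\mathbb{Z}^3$: the cardinality bound $|G|\leq c$, and that $G$ is not generalized dicyclic. The cardinality bound is immediate. A space group $G$ is a discrete cocompact subgroup of the Euclidean group $E(3)$, and by Bieberbach's theorem it fits into an extension $1\to \mathbb{Z}^3 \to G \to P \to 1$ with finite point group $P\leq O(3)$. So $G$ is countable, indeed finitely generated.

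The substance is the non-dicyclic check, which we will do uniformly rather than group by group. Suppose $G={\rm Dic}(A,y)$, with $A$ abelian of index $2$, $x^2=y$ and $xax^{-1}=a^{-1}$ for all $a\in A$. We make two observations.

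\begin{itemize}
\item The translation lattice $T\cong\mathbb{Z}^3$ is the unique maximal abelian normal subgroup of finite index; equivalently, $T$ is the set of elements with finitely many conjugates that act trivially. The subgroup $A\cap T$ has index $\leq 2$ in $T$, so it is a rank-$3$ lattice.
\item Hence $x$ acts on $A\cap T$ by inversion, and therefore on $T$ by inversion, since $T\otimes\mathbb{Q}=(A\cap T)\otimes\mathbb{Q}$. So the linear part of $x$ is $-I\in O(3)$, which forces $x$ to be a point reflection $v\mapsto -v+t$.
\end{itemize}

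Such an affine map is an involution, so $x^2=1$, contradicting $x^2=y\neq 1$. Thus no space group is generalized dicyclic. We may even avoid the identification of $A$ with anything special: any element of $G$ acting by inversion on a finite-index subgroup of $T$ has linear part $-I$ and is an involution.

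The step we expect to need the most care is the claim that conjugation by $x$ inverting $A\cap T$ forces the linear part to be exactly $-I$. The point is that the linear part of $x$ is determined by its action on the lattice $T$, since $gtg^{-1}=L(g)t$ for translations $t$. Agreement with $-I$ on a full-rank sublattice then gives $L(x)=-I$ by linearity. Once that holds, writing $x(v)=-v+t$ gives $x^2(v)=v$, and the argument closes.

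The Structure theorem then gives naturality for all non-abelian space groups. As a sanity check against the $2$-dimensional table, the same argument applies verbatim to wallpaper groups, with $-I\in O(2)$ being the rotation by $\pi$. This is consistent with the earlier corollary that all non-abelian wallpaper groups are natural.
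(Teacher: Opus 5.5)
Your proof is correct, but it reaches the contradiction through the other half of the dicyclic presentation. The paper works with the abelian subgroup $A$. Its elements commute with translations, so they must themselves be translations. Hence $A=T$, and $y\in A$ would be an element of order $2$ in the torsion-free lattice $T\cong\mathbb{Z}^3$. You work instead with the element $x\notin A$. Since $x$ inverts the full-rank sublattice $A\cap T$, its linear part is $-I$, so $x$ is a point reflection and $x^2=1\neq y$.

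Both routes rest on the same fact: the linear part of $g$ is determined by its conjugation action $t\mapsto L(g)t$ on any full-rank sublattice of $T$. Both also extend verbatim to crystallographic groups in every dimension. The difference is in what each isolates. The paper's route is a statement about $A$ alone: every finite-index abelian subgroup lies in $T$ and so is torsion-free. Yours is a statement about $x$ alone: every element inverting a finite-index sublattice is an involution.

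Your write-up is also more careful at the one delicate point. The paper's sentence ``every element $a\in A$ commutes with translations'' is only justified a priori for translations in $A\cap T$, which has index $\leq 2$ in $T$. Upgrading this to $L(a)=I$ needs exactly your full-rank observation. There, $A\subseteq T$ already suffices, so $A=T$ is not needed.

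Your first bullet is never used and could be dropped: the uniqueness of $T$ as a maximal abelian normal subgroup, and the garbled phrase about elements ``that act trivially''. Only $[T:A\cap T]\leq 2$ matters.
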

\begin{proof} 
If $G$ is a space group and $T=\mathbb{Z}^3$ its translation subgroup.
If $G={\rm Dic}(A,y)$ then $A$ has index $2$ in $G$. But this would mean $T=A$ (*)
which contradicts that $y=x^2 \in A$ has order $2$ while $T$ is torsion free.
(*) Why is $T=A$? As part of the space group, every element $a \in A$ commutes with 
translations and so must be a translation so that $A=T$. 
\end{proof} 

\paragraph{}
And it is even stronger as no positive dimensional Euclidean crystallographic group
is dicyclic in any dimension $\geq 1$. 

\begin{coro}
No crystallographic group is dicyclic. 
So that all non-abelian crystallographic groups are natural.
\end{coro}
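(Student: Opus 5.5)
We have to show that no crystallographic group is generalized dicyclic. Naturality of the non-abelian ones then follows from the structure theorem. For the structure theorem we also need $|G|\le c$: a crystallographic group is finitely generated, hence countable.

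Let $G$ be a crystallographic group in dimension $n\ge 1$, that is, a discrete cocompact subgroup of $E(n)$. By Bieberbach, its translation subgroup $T=G\cap\mathbb{R}^n\cong\mathbb{Z}^n$ is a maximal abelian normal subgroup of finite index. It is torsion free, and its centralizer in $G$ is $T$ itself.

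Suppose $G={\rm Dic}(A,y)$, with $A$ abelian of index $2$, $x\notin A$, $x^2=y$ an involution, and $xax^{-1}=a^{-1}$ for all $a\in A$. I would proceed in the following order.

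First, I claim every element of $xA$ has order $4$ and squares to $y$. Indeed, $(xa)^2=xax a=x a x^{-1} x^2 a = a^{-1} y a = y$. So all involutions of $G$ lie in $A$. Since $y$ is inverted by conjugation by $x$ and $y=y^{-1}$, $y$ is central.

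Second, I compare $A$ with $T$. $A\cap T$ has finite index in $T$, so it contains a lattice $L\cong\mathbb{Z}^n$ of full rank. Every $a\in A$ commutes with $L$. An isometry $v\mapsto Mv+w$ commuting with all translations by a full-rank lattice must have $M=1$, because $M$ fixes a spanning set. Hence $A$ consists of translations, so $A\subseteq T$. (This replaces the slightly too quick "(*)" step in the space group corollary.) Then $y\in A$ is a translation of order $2$. But $T$ is torsion free and $y\neq 1$, which is a contradiction.

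The main obstacle is the second step: making rigorous that an element centralizing a finite-index subgroup of the lattice is itself a translation. The linear-part argument above settles it, since finite index implies full rank. It is worth double-checking this in dimension $1$. There $G$ is $\mathbb{Z}$ or $D_\infty$: $\mathbb{Z}$ is abelian, hence not generalized dicyclic by definition, and $D_\infty$ has non-central involutions, so it cannot be dicyclic by the first step either.

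Finally, non-abelian crystallographic groups are not generalized dicyclic and are countable, so the structure theorem gives naturality.
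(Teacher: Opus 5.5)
Your proposal is correct and follows the paper's route. That route is given in the space-group corollary and extended to all dimensions $n\geq 1$: the abelian index-two subgroup $A$ must consist of translations, so the involution $y=x^2\in A$ would be a nontrivial torsion element of the torsion-free lattice $T$. Your observation that $A\cap T$ has index at most $2$ in $T$, hence spans $\mathbb{R}^n$ and forces trivial linear part on anything commuting with it, properly justifies the paper's step (*), which tacitly assumes $A$ commutes with all of $T$ and claims $A=T$ where $A\subseteq T$ suffices.
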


\paragraph{}
We always assume they a Lie group is second countable, 
so that $|G| \leq c$. We already knew that any connected Lie 
group was natural. We have also:

\begin{thm}
Every non-metabelian Lie group is natural.
\end{thm}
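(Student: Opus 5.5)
The plan is to reduce the statement to the Corollary that every non-metabelian group of cardinality $\leq c$ is natural, which itself rests on the structure theorem. Two hypotheses need checking. First, $G$ must not be generalized dicyclic. Second, $|G| \leq c$.

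For the first, I would use that every generalized dicyclic group ${\rm Dic}(A,y)$ is metabelian. The abelian subgroup $A$ has index $2$, so it is normal, and $G/A \cong C_2$ is abelian. Hence $G' \subset A$, and since $A$ is abelian, $G''=1$. A non-metabelian Lie group is therefore automatically non-abelian and not generalized dicyclic.

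For the second, I would use the paper's standing convention that Lie groups are second countable. A second countable manifold has at most countably many connected components. Each component is a separable metric space, so each has cardinality at most $c$. Altogether $|G| \leq \aleph_0 \cdot c = c$.

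With both hypotheses verified, the structure theorem, in the form of the non-metabelian Corollary, gives that $G$ is natural. Concretely, this means the colored complete Cayley metric with distinct weights $\lambda_{[g]} \in (1,2)$ has ${\rm Iso}(G,d)=R(G)$, and any competing regular group of isometries must coincide with $R(G)$.

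The only step I would flag as delicate is the cardinality bound. It depends entirely on second countability, or at least on the group having at most $c$ components. For a Lie group with uncountably many components, for example an uncountable discrete group of cardinality $>c$, the Cayley-graph coloring runs out of real labels. I would state explicitly that the theorem is proved under the paper's convention that Lie groups are second countable, rather than try to remove that assumption.
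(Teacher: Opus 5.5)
Your proposal is correct and follows the paper's route: you obtain $|G|\le c$ from the standing second-countability convention, then invoke the structure theorem (via the non-metabelian Corollary), using that every generalized dicyclic group is metabelian. Your justification of that last step is the right one; the paper's phrasing that ``a non-abelian Lie group can not be generalized dicyclic'' is too strong, as ${\rm Pin}^-(2)={\rm Dic}(S^1,-1)$ shows, and non-metabelian is exactly the hypothesis needed.
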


\begin{proof}
The cardinality condition $|G| \leq c$ holds. By the structure theorem, and 
because a non-abelian Lie group can not be generalized dicyclic, the naturality holds. 
\end{proof} 

\paragraph{}
Examples are the non-abelian Lie groups ${\rm Dic}(T^n,y)$, or 
${\rm Dic}(\mathbb{R}^k \times \mathbb{T}^n,y)$. They are 
dicyclic and insensitive to the above argument. 

\paragraph{}
Examples of abelian groups and generalized dicyclic groups: they are all non-natural
by the structure theorem. Generalized dicyclic groups are never reflection groups.
Here are some examples of generalized dicyclic groups:

\begin{center}
\begin{tabular}{|ccc|} \\ \hline
$A$         &  $y$                  & $G={\rm Dic}(A,y)$     \\  \hline
$S^1$       &  $-1$                 &   ${\rm Pin}^{-}(2)$     \\
$T^2$       &  $(-1,1)$             &   ${\rm Dic}(T^2,y)$     \\
$T^n$       &  $(-1,1,\ldots,1)$    &   ${\rm Dic}(T^n,y)$     \\
$\mathbb{R}^k \times T^n$  & $(0,-1,1,\ldots,1)$ & ${\rm Dic}(\mathbb{R}^k \times T^n,y)$ \\ \hline
\end{tabular}
\end{center}

\begin{coro}
Every non-natural Lie groups is disconnected and metabelian.
\end{coro}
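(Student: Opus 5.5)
We argue by contraposition: take a non-natural Lie group $G$ and show it is both metabelian and disconnected. Throughout we use the standing convention that Lie groups are second countable, so $|G| \leq c$ and the structure theorem and its corollaries apply.

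\emph{Metabelian.} If $G$ is not metabelian, the preceding theorem (every non-metabelian Lie group is natural) makes $G$ natural. Equivalently, we may invoke the corollary that every non-metabelian group of cardinality $\leq c$ is natural. Either way, a non-natural $G$ has $G''=\{1\}$.

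\emph{Disconnected.} The paper has already recorded that every connected Lie group is natural. So a non-natural $G$ must be disconnected.

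For a more self-contained route to disconnectedness, split into cases.
\begin{itemize}
\item If $G$ is connected and abelian, then $G \cong \mathbb{R}^k \times \mathbb{T}^n$. This is a connected abelian Banach--Lie group, so it is natural by Corollary~\ref{Banachliegroup}.
\item If $G$ is connected and non-abelian, the structure theorem reduces naturality to showing that $G$ is not generalized dicyclic. Suppose $G={\rm Dic}(A,y)$. Since $A$ is an index-two subgroup, it is open and closed (its complement is the other coset, which is a translate of $A$). Connectedness then forces $G=A$, which is abelian, a contradiction.
\end{itemize}
In both cases $G$ is natural.

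The main obstacle is the closedness of $A$ in the second case. An abstract index-two subgroup of a Lie group need not be closed a priori. There are two ways to handle this.
\begin{itemize}
\item Use the squaring map. In ${\rm Dic}(A,y)$ every element outside $A$ squares to $y$. Hence the continuous map $g \mapsto g^2$ takes only the value $y$ on the coset $G\setminus A$, so that coset is contained in the closed set $\{g : g^2=y\}$. Near the identity, squaring is a local diffeomorphism, so $g^2$ is close to $1$ and cannot equal $y \neq 1$. A small identity neighbourhood therefore misses $G \setminus A$, i.e.\ $A$ contains a neighbourhood of $1$. Thus $A$ is open, hence also closed.
\item Alternatively, in a connected group every index-two subgroup contains all squares. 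Since $\exp(X)=\exp(X/2)^2$, it contains a neighbourhood of the identity. A connected group is generated by such a neighbourhood, so the subgroup is everything.
\end{itemize}
This closes the argument.
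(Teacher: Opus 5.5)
Your argument is correct and is the paper's own: the corollary is the contrapositive of the two facts that connected Lie groups are natural and that non-metabelian Lie groups (which satisfy $|G|\le c$) are natural. Your self-contained treatment of the connected case goes beyond the paper, which simply cites naturality of connected Lie groups, and it is sound. The abelian case goes through the connected abelian Banach--Lie corollary. In the non-abelian case you apply the structure theorem, using the squaring (or $\exp$) argument to show that a connected Lie group has no index-two subgroup and so cannot be generalized dicyclic.
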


\begin{proof}
(i) We now that connected Lie groups are natural. \\
    In the abelian case, this is even true for every connected abelian Banach Lie group.\\
(ii) Every non-metabelian Lie group is natural. \\
\end{proof} 

\section{Summary of open problems}

\paragraph{}
We had formulated a week ago the problem whether all reflection groups are natural. 
This is now settled \cite{Sutherland2026} with a surprising proof. We try here to summarize
a few open things. We use the notation $2A = \{ 2a | a \in A \}$ for Abelian groups, where
$2a=a+a$. We also use the notation $G^2 = \{ g^2 | g \in G \}$ for non-abelian groups,
where $g^2=g*g$.  Also use $\ker(2)=A[2]=\{a \in A | 2a=0 \}$ in the Abelian case.

\begin{itemize}
 \item Is $(F,+)$ natural for any field $F$? (It is true for fields $|F| \leq c$.)
 \item Is $C_3^{k}$ natural for every cardinality $k$? (A particular test case of the above).
 \item If $G$ is non-abelian, is:  $G$ natural equivalent to $G$ is not generalized dicyclic?
 \item Does for an abelian group $A$, the condition $A=2A$ imply naturality?
 \item Does for a non-abelian group $G$, the condition $G^2=G$ imply naturality?
\end{itemize}

\paragraph{}
The last two {\bf idempotence statements} were triggered by looking at already established
evidence so far:

\begin{itemize}
\item $\mathbb{R}$ is natural because $2 \mathbb{R} = \mathbb{R}$.
\item Odd finite Abelian groups are natural because $2 G = G$.
\item $C_2^k$ is natural for all cardinalities $k$. 
\item For abelian groups $A$ with $|A| \leq c$, $A$ is natural if and only if $2A=A$ or $A=C_2^k$.
\item For finite groups, we know that naturality is equivalent to $A=2A$ or $A=C_2^k$ (the reason 
is that $A=2A$ produces $C_{2m+1}$ groups which are natural and because of the Abelian
structure theorem. \\
\item For abelian $A$ with $2A \neq A$, there is an index $2$ subgroup $B$ which (if $A$ is not elementary abelian 2)
gives a dihedral competitor ${\rm Dih}(B)$ to $A$. 
\end{itemize}

\paragraph{}
Since all reflection groups are natural, the only real obstacle in the abelian case that remains is the
question {\bf "does $2A=A$  imply naturality?"} This would completely characterize all abelian groups
as it would settle the conjectured picture:

\begin{center}
Conjectured structure for Abelian groups: $A$ natural $\Leftrightarrow$ $A=2A$ or $A[2]=A$. 
\end{center}

\paragraph{}
As for non-abelian groups, the conjecture (following work of Leemann and de la Salle, especially 
their orientation rigidity theorem which equates generalized dicyclic with "orientation rigid") would
be:

\begin{center}
Conjectured structure for non-abelian groups: $A$ natural $\Leftrightarrow$ $A$ not generalized dicyclic.
\end{center}

\paragraph{}
The weaker conjecture for nonabelian groups that {\bf $G^2=G \Rightarrow$ natural} would support this general
conjecture. It would not settle it, as the class $G^2=G$ is smaller than the class of natural groups. 
(Example:the non-abelian free group $F_2 = \langle a,b \rangle$ is non-dicyclic - and so by the structure theorem natural -
but it is neither reflection generated, nor does it satisfy $F^2_2=F_2$). But it would be an important step towards
removing the cardinality restrictions. 

\paragraph{}
Like {\bf "reflection generated implies natural"}
(which is now freed from cardinality constraints), the {\bf idempotence conjecture} {\bf "$G^2=G$ implies natural"} would
be nice to have cardinality independent. In the abelian case, this is mirrored by the only remaining
obstacle {\bf $2A=A$ implies natural}.

\section*{Appendix: The orientation rigidity theorem of Leemann- de la Salle}

\paragraph{}
Let $G$ be an arbitrary group. With the set $S \subset G \setminus\{1\}$ generating $G$,
the Cayley graph $\Gamma(G,S)$ is called the {\bf complete Cayley graph}. Attaching
to each edge $(g,gs)$ the {\bf color} $[s]=\{s,s^{-1}\}$ produces the
{\bf complete colored Cayley graph}. A {\bf color-preserving automorphism} is a permutation 
$\phi:G \to G$ if $\phi(1)=1$ and $\phi(gs) \in \phi(g) \{s,s^{-1} \}$ for all $g \in G$ 
and $s \in S$. The group $G$ is called {\bf orientation-rigid} if $\phi={\rm Id}$ the 
only such permutation. The notion of orientation-rigid is defined for any abstract group.
\paragraph{}
For this appendix, GPT 5.6 sol helped us to read, analyze and extract the relevant part of 
their Theorem~7 in \cite{LeemannDeLaSalle2021} and to restrict the more general arguments 
to the complete Cayley graph case, we need here. 

\begin{thm}[Orientation rigidity theorem \cite{LeemannDeLaSalle2021}]
$G$ is orientation-rigid $\Leftrightarrow$ $G$ is neither generalized dicyclic nor 
abelian with an element of order greater than $2$.
\end{thm}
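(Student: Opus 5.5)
We prove the two directions separately. The direction ``$\Rightarrow$'' is handled by exhibiting nontrivial color-preserving permutations fixing $1$. The direction ``$\Leftarrow$'' is handled by showing that any color-preserving $\phi$ with $\phi(1)=1$ is forced to be the identity unless $G$ has one of the two exceptional shapes.

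\emph{Necessity.} If $G$ is abelian with an element of order $>2$, take $\phi(g)=g^{-1}$. Then $\phi(gs)=g^{-1}s^{-1}=\phi(g)s^{-1}$, so $\phi$ is color-preserving. It fixes $1$, and it is not the identity because some element is not its own inverse. If $G={\rm Dic}(A,y)$, take $\phi$ to be the identity on $A$ and inversion on $xA$, exactly the involution used in the proof of the structure theorem. One checks the four cases $g\in A$ or $g\in xA$, $s\in A$ or $s\in xA$ and verifies $\phi(gs)\in\phi(g)\{s,s^{-1}\}$. The key identities are: $(xa)^{-1}=xay$, elements of $xA$ have square $y$, and $xa\,b=xb\,a$. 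Moreover $\phi\neq{\rm Id}$ since $(xa)^{-1}=xay\neq xa$.

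\emph{Sufficiency.} Let $\phi$ be color-preserving with $\phi(1)=1$. Applying the condition at $g=1$ gives $\phi(s)\in\{s,s^{-1}\}$ for all $s$. Applying it at a general $g$ with $s=g^{-1}h$ gives, for all $g,h$,
\[
\phi(g)^{-1}\phi(h)\in\{g^{-1}h,\;h^{-1}g\}.
\]
Define the \emph{sign set} $P=\{g:\phi(g)=g\}$ and $N=\{g:\phi(g)=g^{-1}\}$; together they cover $G$. Now analyse pairs. If $g\in P$ and $h\in N$ with $g,h$ not involutions, the relation reads $g^{-1}h^{-1}\in\{g^{-1}h,\,h^{-1}g\}$. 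The first alternative forces $h^2=1$. The second gives $g^{-1}h^{-1}=h^{-1}g$, i.e. $hg^{-1}h^{-1}=g$: so $h$ inverts $g$. If $g,h\in N$, the relation reads $gh^{-1}\in\{g^{-1}h,\,h^{-1}g\}$, which forces either $(gh^{-1})^2=1$-type relations or $gh^{-1}=h^{-1}g$, i.e. $g$ and $h$ commute.

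From these constraints I would show the following. Suppose $\phi\neq{\rm Id}$, so that some non-involution $h$ lies in $N\setminus P$. Then the set $A$ of elements on which $\phi$ is trivial, suitably corrected by involutions, is an abelian subgroup of index at most $2$. Either $A=G$, in which case $\phi$ is inversion on an abelian group with an element of order $>2$. Or every element outside $A$ inverts $A$ and squares to a fixed central involution $y$, which is precisely ${\rm Dic}(A,y)$.

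\textbf{Main obstacle.} The real work is the bookkeeping that turns the pairwise relations into this global structure. In particular one must handle involutions, which lie in both $P$ and $N$, and show that the case split is consistent across all triples. I would first treat elements of order $\ge 3$, deriving that $N$-elements commute pairwise and invert $P$-elements. Then I would use triples $g,h,gh$ to prove closure of $A$, with index $2$ coming from the fact that the product of two elements of $N\setminus P$ lands in $P$. This is where I would follow the argument leading to Theorem~7 of \cite{LeemannDeLaSalle2021}, specialised to the complete Cayley graph, where $S=G\setminus\{1\}$ makes every pair $g,h$ available.
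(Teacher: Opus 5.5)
Your necessity direction is fine. The four-case check is equivalent to the paper's observation that $g\mapsto g\chi(g)$, with $\chi:G\to\{1,y\}$ the quotient map by $A$, is an automorphism satisfying $\psi(g)\in\{g,g^{-1}\}$. Your one-line derivation that an inverted non-involution $h$ inverts every fixed $g$ is also correct, and cleaner than the paper's mixed-pair step.

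The gap is in sufficiency: the two structural claims your plan rests on are false.

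\emph{Elements of $N$ need not commute.} For $g,h\in N$, the condition $gh^{-1}\in\{g^{-1}h,h^{-1}g\}$ means $gh=hg$ \emph{or} $g^2=h^2$, and the second alternative really occurs. Take your own $\phi$ on ${\rm Dic}(A,y)$ and the elements $x$ and $xa$ with $a^2\neq 1$; such $a$ exist because the group is non-abelian. Both lie in $N$, they do not commute, and they share the square $y$.

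\emph{The fixed set of $\phi$ cannot in general be turned into an abelian subgroup of index at most $2$.} On $Q_8$, the map inverting $\pm i$ and fixing $\pm 1,\pm j,\pm k$ is color-preserving, by your own pair analysis, since $i$ inverts $j$ and $k$. Yet its fixed set $\{\pm 1,\pm j,\pm k\}$ is not closed under multiplication and generates the non-abelian group $Q_8$. Global inversion on $Q_8$ is also color-preserving, with fixed set $\{\pm 1\}$ of index $4$. There $ij=k$ shows that a product of two elements of $N\setminus P$ need not be fixed. The same maps work on every $Q_8\times C_2^{(k)}$. These groups are dicyclic, so the theorem is not contradicted, but the dicyclic structure cannot be read off from $\phi$.

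Your branch ``$A=G$, so $\phi$ is inversion'' is also incoherent if $A$ is the fixed set. The abelian case is trivial anyway: in exponent $2$ one has $\phi(g)\in\{g,g^{-1}\}=\{g\}$.

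The missing idea is the paper's case split. In case (i), the fixed elements generate an abelian subgroup and some fixed $a_0$ has $a_0^2\neq 1$. Then for inverted non-involutions $x_1,x_2$, the product $x_1x_2$ centralizes $a_0$. So it cannot itself be an inverted non-involution, since it would then have to invert $a_0$. This extra hypothesis is exactly what makes your claim that a product of two inverted elements is fixed true. It yields $G={\rm Dic}(H,y)$, with $H$ generated by the fixed elements and the products $x_1x_2$, and $y=x^2$.

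The remaining case ($\phi$ is global inversion, or the fixed elements generate a non-abelian group) needs a genuinely different argument. The quaternion-detection lemma says that if $gh=hg^{-1}$ and $hg=gh^{-1}$, then $\langle g,h\rangle$ is a quotient of $Q_8$, hence $\cong Q_8$ when $g,h$ do not commute. This produces $i,j$ with $\langle i,j\rangle\cong Q_8$. Multiplying each $s$ by the appropriate element of $\{1,i,j,ij\}$ gives a central involution. One concludes $G\cong Q_8\times E$ with $E$ an elementary abelian $2$-group, i.e.\ $G={\rm Dic}(\langle i\rangle\times E,i^2)$.

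Your sketch has no counterpart of this quaternion branch. Deferring ``the bookkeeping'' to Theorem~7 of Leemann--de la Salle leaves exactly the substance of the theorem unproved.
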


\paragraph{}
Central to the proof is the ability to detect the {\bf quaternion group} $Q_8$:

\begin{lemma}[ Quaternion-detection lemma of Leemann-DeLaSalle]
Let $g,h \in G$. If $gh=hg^{-1}$ and $hg=gh^{-1}$ then $\langle g,h \rangle$ is a quotient of $Q_8$. 
If $G$ is non-abelian, or contains an element of order greater than $2$, then $\langle g,h \rangle \cong Q_8$.
\end{lemma}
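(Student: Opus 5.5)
Setting $u=gh$, the first relation gives $u = hg^{-1}$ and the second gives $hg = gh^{-1}$. The plan is to derive, from these two relations alone, the defining relations of $Q_8$, namely $g^4=1$, $g^2=h^2$ and $h^{-1}gh=g^{-1}$. Then we use the rigidity of the quaternion presentation to identify $\langle g,h\rangle$.

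\textbf{Deriving the relations.} The relation $gh=hg^{-1}$ says exactly that $h^{-1}gh=g^{-1}$, so conjugation by $h$ inverts $g$. Symmetrically, $hg=gh^{-1}$ says $g^{-1}hg=h^{-1}$, so conjugation by $g$ inverts $h$. From the first, $hg=g^{-1}h$. Comparing this with the second gives $g^{-1}h=gh^{-1}$, that is, $h^2=g^2$. Now $g^2$ commutes with $g$. It also equals $h^2$, which commutes with $h$, so $z:=g^2=h^2$ is central in $\langle g,h\rangle$. But conjugation by $h$ sends $g^2$ to $g^{-2}$, so $z=z^{-1}$ and hence $z^2=g^4=1$. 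These are precisely the relations of $Q_8=\langle i,j \mid i^4=1,\ i^2=j^2,\ j^{-1}ij=i^{-1}\rangle$. Since the group they define has order $8$ (it is the quaternion group), the map $i\mapsto g$, $j\mapsto h$ extends to a surjective homomorphism $Q_8\to\langle g,h\rangle$. So $\langle g,h\rangle$ is a quotient of $Q_8$, which proves the first claim.

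\textbf{Listing the quotients.} The proper quotients of $Q_8$ are $Q_8/\langle -1\rangle\cong C_2\times C_2$, and quotients of it: $C_2$ and $1$. This is because every nontrivial normal subgroup of $Q_8$ contains the unique involution $-1$. So either $\langle g,h\rangle\cong Q_8$, or $\langle g,h\rangle$ is elementary abelian of exponent at most $2$. In the latter case $g^2=h^2=1$.

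\textbf{The second claim, and the main obstacle.} Here the hypothesis ``$G$ non-abelian or with an element of order $>2$'' cannot literally force $\langle g,h\rangle\cong Q_8$ for \emph{every} pair $g,h$. For example, $g=h=1$, or $g,h$ two commuting involutions, satisfy both relations in any group. So the step I expect to be the real obstacle is pinning down the intended hypothesis. I would read the lemma as it is used in Leemann--de la Salle. The pair $(g,h)$ arises from a color-preserving automorphism that flips orientations, and the hypothesis guarantees that $g$ (or $h$) has order greater than $2$, i.e.\ $g^2\neq 1$. Under that nondegeneracy, the quotient cannot be elementary abelian, so it must be all of $Q_8$.

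I would therefore state and prove the second part in the form ``if moreover $g^2\neq 1$ (equivalently $h^2\neq1$, since $g^2=h^2$), then $\langle g,h\rangle\cong Q_8$.'' I would then add a remark explaining how, in the orientation rigidity argument, the global hypothesis on $G$ is used to produce such a pair with $g^2\neq 1$.
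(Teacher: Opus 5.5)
Your proof is correct and follows the paper's argument: you derive $h^{-1}gh=g^{-1}$, $g^2=h^2$ and $g^4=1$, obtain a surjection $Q_8\to\langle g,h\rangle$, and use that every proper quotient of $Q_8$ is elementary abelian. You are also right that the second claim is false as literally stated (e.g.\ $g=h=1$ in any non-abelian $G$); what the paper's one-line argument actually proves, and what step (A) of the orientation-rigidity proof uses (there $(gh)^2\neq 1$), is the version with the hypothesis placed on $\langle g,h\rangle$, namely that $\langle g,h\rangle$ is non-abelian or contains an element of order greater than $2$, and by your dichotomy this is equivalent to your condition $g^2\neq 1$.
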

\begin{proof}
The first relation gives $hgh^{-1}=g^{-1}$. The second gives both $g=hgh$ and $g=h^{-1}gh^{-1}$. 
Combining them gives $g^2=h^2=h^{-2}$ so that $g^4=1$ and $g^2=h^2$. We see that $g,h$
satisfy the relations in the quaternion group $Q_8=\langle i,j | i^4=1, i^2=j^2, jij^{-1}=i^{-1} \rangle$
Because all proper quotients of $Q_8$ are elementary abelian $2$-groups.
\end{proof}

\begin{proof}
$\Rightarrow$ (\cite{LeemannDeLaSalle2021} credit \cite{Watkins1971,Watkins1976}).
If $G$ is abelian with an element of order greater than $2$, 
or if $G$ is generalized dicyclic, then the complete colored Cayley graph 
admits a nontrivial color-preserving automorphism fixing $1$. 
In fact the same automorphism preserves the inverse-pair colors for every 
symmetric Cayley graph of $G$.
Proof: If $G$ is abelian, inversion $i(g)=g^{-1}$ is a group automorphism.  
It is nontrivial precisely when $G$ has an element of order greater than 
$2$, and it preserves every inverse pair $\{s,s^{-1}\}$.
Now suppose $G={\rm Dic}(A,y)=A \sqcup xA$, where $A$ is abelian and $y \in A$ has order $2$
and $x^2=y, xax^{-1}=a^{-1}$ for $a \in A$. 
Define $\psi(a)=a, a\in A$ and $\psi(xa)=(xa)^{-1}, a \in A$.
Equivalently, since $x^{-1}=xy$ and $(xa)^{-1}=xay$, $\psi(g)=g$ if $g \in A$
$\psi(g)=gy$, $g \notin A$.
Because $y$ is central and the quotient $G/A \cong C_2$, this is a nontrivial 
group automorphism. For every $g\in G$ one has $\psi(g)\in\{g,g^{-1}\}$, 
so every inverse-closed set is invariant under $\psi$. 
Hence $\psi$ is color-preserving and fixes $1$ for every symmetric Cayley graph. \\

$\Leftarrow$ Every pair of distinct vertices is joined by an edge, colored by the inverse pair
$[yx^{-1}]=\{yx^{-1},xy^{-1}\}$. Suppose that $\phi$ is color-preserving and fixes $1$. 
Putting $g=1$ gives $\phi(g) \in \{g,g^{-1}\}$ for all $g \in G$. 
The problem is now algebraic. How can a permutation consistently choose between fixing and inversion,
element by element? Write $A=\{g\in G:\phi(g)=g \}$ and $B=\{g \in G:\phi(g)=g^{-1}\ne g\}$.
Then $G=A \sqcup B$ and every involution belongs to $A$, whereas every element of $B$ 
has order greater than $2$.  If $\phi \neq {\rm Id}$, then $B \neq \emptyset$. 
The converse first does two-generator calculations: \\

(A) {\bf Two fixed elements}: If $g,h \in A$ and $gh \in B$, then $\langle g,h \rangle \cong Q_8$ .
Proof: Since $B$ is closed under inversion, both $gh$ and $h^{-1}g^{-1}$ lie in $B$. 
Applying the color condition to the two factorizations give
$gh=hg^{-1}, hg=gh^{-1}$.  Moreover $gh\in B$ has order greater than $2$. 
The quaternion-detection lemma applies.

(B) {\bf Mixed pair}: If $g \in A$ and $h \in B$, then $hgh^{-1}=g^{-1}$.
If additionally $hg \in A$, then $\langle h,g \rangle \cong Q_8$.
Proof: If $hg \in A$, then $(hg)g^{-1}=h \in B$, so quaternion detection gives a copy of $Q_8$, 
and hence the second statement. If $hg\in B$, applying the color condition to 
$hg$ gives two possibilities. One directly yields the first statement, 
the other forces $g$ and $h$ to commute. The latter alternative, 
combined with the fixed/inverted condition again gives the first statement.

Since $G$ is non-abelian and $\phi \neq {\rm Id}$, we have $B \neq \emptyset$. Distinguish two cases:

{\bf (i): An abelian fixed part and a non-involution exists.} \\
Suppose the subgroup generated by the fixed elements is abelian and that there exists
$a_0 \in A, a_0^2 \ne 1$. Choose $x \in B$. By the mixed pair lemma,
$x a x^{-1} = a^{-1}, a \in A$.
If $x_1,x_2\in B$, then conjugation by $x_1 x_2$ fixes every element of 
$A$, because each $x_i$ acts there by inversion.  
In particular $x_1x_2$ centralizes $a_0$. 
It cannot lie in $B$, since the Mixed pair part above would then require it to invert $a_0$. 
Hence $x_1 x_2\in A$. Let $H$ be the subgroup generated by $A$ together with all products 
$x_1 x_2$ with $x_i \in B$. The same calculation as shows that $H$ is abelian, 
that $G=H\sqcup xH$, and that $xhx^{-1}=h^{-1}, h\in H)$. 
Also $x^2\in H$ and conjugation by $x$ fixes $x^2$, while inversion sends it to $x^{-2}$, so
$x^2=x^{-2}$. Since $x\in B$, $x^2 \ne 1$. Hence $x$ has order $4$, 
$y=x^2$ is a nontrivial involution of $H$, and $G={\rm Dic}(H,y)$.

{\bf (ii): The quaternion core} The remaining alternatives are
if $\phi$ agrees with inversion on the relevant products in the complete graph,
of if the subgroup generated by fixed elements is non-abelian.
In both instances, a non-commuting pair $i,j$ is produced for which $\langle i,j \rangle \cong Q_8$.
For instance, in the global-inversion case, color preservation gives for any $s,t\in G$ because
$st \in \{ ts,ts^{-1} \}$ and $ts \in\{ st,st^{-1} \}$.
The quaternion lemma assures that every pair either commutes or generates $Q_8$. 
Since $G$ is non-abelian, a quaternion pair exists.
Fix such a pair $i,j$. For each $s \in G$, multiply $s$ by one of $1,i,j,ij$
so as to cancel its conjugation action on $i$ and $j$. Set
$x_s=s$ if $[s,i]=[s,j]=1$, $x_s=is$  if $[s,i]=1, [s,j] \neq 1$, $x_s=js$ 
 if $[s,i] \ne 1,  [s,j]=1$ and $x_s=ijs$ if $[s,i] \neq 1, [s,j] \neq 1$.  \\
{\bf Normalization:} The element $x_s$ satisfies $x_s^2=1, x_s\in Z(G)$ for every $s\in G$. \\
\footnote{For more details, look at lemma $21$ in Leemann-de la Salle. Here is a sketch}
Proof: By construction $x_s$ commutes with $i$ and $j$. Apply the color condition 
to elements such as $j x_s$ and $ij x_s$. In the general setting of their paper, one must 
first verify that these elements belong to $S^{\leq 3}$, here that verification is automatic.
Comparing the two allowed color images forces $x_s=x_s^{-1}$, hence $x_s^2=1$. 
Applying the same two-generator analysis to $x_s$ and an arbitrary 
group element, shows that $x_s$ commutes with it. So, $x_s$ is a central involution. \\

Define $E= \langle x_s, s\in G \rangle$. The just established normalization shows that $E$ is a 
central elementary abelian $2$-group. Since every $s$ differs from $x_s$ by an element 
of $\langle i,j \rangle$, one has $G=\langle i,j,E \rangle$. Moreover $E \cap \langle i,j \rangle=\{1,i^2\}$
Splitting off this common central involution gives an elementary abelian $2$-group $H$ such that
$G\cong Q_8 \times H$. Such a group is generalized dicyclic: take
$A=\langle i \rangle \times H$, $y=i^2$ and $x=j$. 
Then $A$ is abelian of index $2$, and $j^2=i^2=y$, $jaj^{-1}=a^{-1}, a \in A$. 
We again end in a generalized dicyclic family.
\end{proof}

\bibliographystyle{plain}

\end{document}